\documentclass[11pt]{article} 
\usepackage[utf8]{inputenc}
\usepackage{amsfonts,amsmath,amsthm,amssymb}
\usepackage[justification=centering]{caption}
\usepackage{color,soul}
\usepackage{fullpage}
\usepackage{enumitem}   
\usepackage{mathtools}
\usepackage{xcolor}
\usepackage{thmtools}
\usepackage{float}
\usepackage{amsthm}

\usepackage{amsmath,stackengine}
\usepackage{hyperref}
\usepackage{cleveref}


\usepackage{thm-restate}
\makeatletter
\newcommand{\addresseshere}{%
  \enddoc@text\let\enddoc@text\relax
}
\makeatother

\newtheorem{theorem}{Theorem}[section]
\newtheorem{proposition}[theorem]{Proposition}
\newtheorem{lemma}[theorem]{Lemma}
\newtheorem{corollary}[theorem]{Corollary}

\theoremstyle{definition}

\newtheorem{definition}[theorem]{Definition}

\usepackage{etoolbox}
\AtEndEnvironment{proof}{\setcounter{claim}{0}}
\newtheoremstyle{notation}
{1em}
{1em}
{}
{}
{\bfseries}
{.}
{.5em} 
{} 
\theoremstyle{notation}

\newcommand*{\R}{\mathbb{R}}

\newcommand{\calA}{\mathcal{A}}

\newcommand{\hroot}{\tilde{\alpha}}
\newcommand*{\C}{\mathbb{C}}

\newenvironment{customthm}[1] {\innercustomthm}
  {\endinnercustomthm}

\newenvironment{customcor}[1]{\innercustomcor}
  {\endinnercustomcor}

\usepackage[colorinlistoftodos]{todonotes}

\title{ 
Computing the $q$-Multiplicity of the Positive Roots of $\mathfrak{sl}_{r+1}(\mathbb{C})$ and Products of Fibonacci Numbers
}

\usepackage{authblk}

\author[1]{Kimberly J. Harry}

\affil[1]{Department of Mathematical Sciences, University of Wisconsin - Milwaukee, \newline
Milwaukee, WI 53211\\
 \textcolor{blue}{\href{mailto:kjharry@uwm.edu}{kjharry@uwm.edu}}
}
\date{}

\begin{document}
\maketitle

\begin{abstract}
Using Kostant's weight multiplicity formula, we describe and enumerate the terms contributing a nonzero value to the multiplicity of a positive root $\mu$ in the adjoint representation of $\mathfrak{sl}_{r+1}(\mathbb{C})$, which we denote $L(\tilde{\alpha})$, where $\tilde{\alpha}$ is the highest root of $\mathfrak{sl}_{r+1}(\mathbb{C})$.
We prove that the number of terms contributing a nonzero value to the multiplicity of the positive root $\mu=\alpha_i+\alpha_{i+1}+\cdots+\alpha_j$ with $1\leq i\leq j\leq r$ in $L(\tilde{\alpha})$ is given by the product $F_{i}\cdot F_{r-j+1}$, where $F_n$ is the $n^{\text{th}}$ Fibonacci number.
Using this result, we show that the $q$-multiplicity of the positive root $\mu=\alpha_i+\alpha_{i+1}+\cdots+\alpha_j$ with $1\leq i\leq j\leq r$ in the representation $L(\tilde{\alpha})$ is precisely $q^{r-h(\mu)}$, where $h(\mu)=j-i+1$ is the height of the positive root $\mu$. 
Setting $q=1$ recovers the known result that the multiplicity of a positive root in the adjoint representation of $\mathfrak{sl}_{r+1}(\mathbb{C})$ is one.
\end{abstract}

\section{Introduction} 
For $r\geq 1$, the type $A_r$ Lie algebra, also known as the special linear algebra, is denoted  $\mathfrak{sl}_{r+1}(\C) = \{X \in M_{r+1}(\C): \text{Tr}(X) = 0\}$. We recall that   $\mathfrak{sl}_{r+1}(\C)$
is a complex vector space with a bilinear product called the Lie bracket, defined by the commutator bracket $[X,Y]=XY-YX$ for $X,Y\in\mathfrak{sl}_{r+1}(\C)$, which is skew-symmetric and satisfies the Jacobi identity.
Given $e_1,e_2,\ldots,e_{r+1}$, the standard basis vectors of $\R^{r+1}$, for each $i\in[r]$, we define
$\alpha_i = e_i - e_{i+1}$ .
We let $\Delta=\{\alpha_1,\alpha_2,\ldots,\alpha_r\}$ be the set of simple roots, $\Phi^+=\Delta \cup \{\alpha_i+\alpha_{i+1}+\cdots+\alpha_j: 1\leq i< j\leq r\}$ be the set of positive roots, and $\tilde{\alpha}= \alpha_1+\alpha_2+\cdots +\alpha_r$ be the highest root of $\mathfrak{sl}_{r+1}(\C)$.

We recall that the Weyl group $W$ is the group generated by reflections, denoted as $s_1, s_2,\ldots, s_r$, through hyperplanes which are orthogonal to the simple roots $\alpha_1,\alpha_2,\ldots,\alpha_r$, respectively. 
The Weyl group of type $A_r$ is isomorphic to the symmetric group $\mathfrak{S}_{r+1}$.
If $\sigma\in W$, then $\ell(\sigma)=k$ is the length of $\sigma$, which means that $k$ is the smallest nonnegative integer such that $\sigma$ can be expressed (not necessarily uniquely) as a product of $k$ reflections.

One area of interest in combinatorial representation theory is the
computation involved in determining the multiplicity of a weight $\mu$ in a highest weight representation of $\mathfrak{sl}_{r+1}(\mathbb{C})$ 
with highest weight $\lambda$.
One way to compute this multiplicity is via Kostant's weight multiplcity formula. 
For weights $\lambda$ and $\mu$, let the \textbf{Kostant's weight multiplicity formula} to be 
\begin{align}
    m(\lambda, \mu)=\sum_{\sigma \in W} (-1)^{\ell(\sigma)}\wp(\sigma(\lambda+\rho)-\mu - \rho)\label{KWMF}
\end{align}
where $\rho = \frac{1}{2}\sum_{\alpha \in \Phi^{+}} \alpha$ and 
$\wp(\xi)$ is Kostant's partition function,
which counts the number of ways the weight $\xi$ can be written as a nonnegative integral sum of positive roots (Theorem on page 589 
in \cite{Kostant}). 

When calculating the Kostant's weight multiplicity formula \eqref{KWMF}, every element in the Weyl group contributes a term to the multiplicity.
We know the order of the Weyl group increases factorially as the rank of the Lie algebra increases, and it has been observed that many of the terms contribute trivially (i.e. give a zero term) to the computation \cite{Cochet}.
To formulate a more efficient way to compute Kostant's weight multiplicity formula, Harris gave the following definition.
\begin{definition}[Definition 1.1 in \cite{PHThesisPublication}]
    For $\lambda$ and $\mu$ integral weights of $\mathfrak{g}$, let the \textbf{Weyl alternation set} to be
    \begin{equation}
        \mathcal{A}(\lambda, \mu) = \{\sigma \in W ~|~ 
 \wp(\sigma(\lambda+\rho)-\mu - \rho)>0\}.
    \end{equation}
\end{definition} 
This set tells us the elements of the Weyl group for which $\sigma(\lambda+\rho)-\mu - \rho$ can be written as a nonnegative integral sum of positive roots.
Knowing the Weyl alternation set allows one to reduce the weight multiplicity computation by only needing to consider those Weyl group elements that contribute nonzero terms to this computation.

An additional challenge in computing weight multiplicities is that there are no general formulas for Kostant's partition function nor for its $q$-analog.
We recall the following result of Lusztig, which defines the $q$-analog of Kostant's weight multiplicity formula.

\begin{definition}[Proposition 9.2 in \cite{Lusztig}]
    The \textbf{$q$-analog of Kostant's partition function} is the polynomial-valued function 
    \begin{align}\label{KPF}
    \wp_q(\xi)=c_0+c_1q+c_2q^2+\cdots + c_kq^k,
    \end{align} 
    where $c_i $ equals the number of ways to write $\xi$ as a sum of exactly $i$ positive roots.
    Then the \textbf{$q$-analog of Kostant's weight multiplicity formula} is given by
\begin{align}
    m_q(\lambda, \mu)=\sum_{\sigma \in W} (-1)^{\ell(\sigma)}\wp_q(\sigma(\lambda+\rho)-\mu - \rho).\label{q-analog}
\end{align}
\end{definition}
Observe that evaluating equation \eqref{q-analog} at $q=1$ recovers the multiplicity in equation \eqref{KWMF}.

Work on characterizing and enumerating the Weyl alternation sets for certain pairs of weights, $(\lambda, \mu)$, has gathered some attention in the literature. 
This includes the following. 
If $\lambda$ and $\mu$ are integral weights of the Lie algebras $\mathfrak{sl}_{3}(\mathbb{C})$ and $\mathfrak{sl}_{4}(\mathbb{C})$, then the sets $\calA(\lambda,\mu)$ are known in general \cite{Lucy,Mabie}.
Harris considers $\hroot$, the highest root of $\mathfrak{sl}_{r+1}(\mathbb{C})$ with $r\geq 1$, and $\mu=0$ and establishes the cardinality of the Weyl alternation set, $|\mathcal{A}(\hroot,0)|$, is a Fibonacci number \cite[Theorem~1.2]{PHThesisPublication}.
Harris also states that if $\mu$ is a nonzero dominant weight,
then the Weyl alternation set is either $\{1\}$ when $\mu = \hroot$ or empty otherwise.
Chang, Harris, and Insko extend these results and show that  when $\mu=0$ and $\lambda$ is the sum of all the simple roots of the simple Lie algebra  $\mathfrak{so}_{2r+1}(\mathbb{C})$,
the cardinality of the Weyl alternation set $\mathcal{A}(\lambda,0)$ is given by a Fibonacci number \cite[Corollary 2.1 and 3.1]{Kevin}.
While in the Lie algebras
$\mathfrak{sp}_{2r}(\mathbb{C})$ and $\mathfrak{so}_{2r}(\mathbb{C})$, the cardinalities of the analogous Weyl alternation sets $\mathcal{A}(\lambda,0)$ are given by a multiple of a Lucas number \cite[Corollary 4.1 and 5.1]{Kevin}.
Harris, Insko, and Williams consider $\tilde{\alpha}$, the highest root of the classical Lie algebras, and $\mu=0$ and show that $|\calA(\hroot,0)|$ satisfies homogeneous linear recurrence relations \cite{Lauren}. 
In the type $B$ case, if $\mu=\varpi_1$ denotes the first fundamental root, then $|\calA(\lambda,\varpi_1)|$
is a Fibonacci number \cite[Corollary 7.4]{Lauren}. 
They also consider the case where $\mu$ is a dominant positive root and give the multiplicity of these weights in the adjoint representation of the classical Lie algebras.
Berenshte\u{\i}n and Zelevinski\u{\i} \cite[Theorem 1.3]{equalto1} provide a comprehensive list of pairs of weights $(\lambda,\mu)$ of the classical Lie algebras and the exceptional Lie algebra $G_2$ for which $m(\lambda,\mu)=1$. 
Harris, Rahmoeller, Schneider, and Simpson consider certain such pairs,
determine $\calA(\lambda,\mu)$, and show that the $q$-multiplicity is a power of $q$ \cite{powersofq}.
For all pairs of weights that Berenshte\u{\i}n and Zelevinski\u{\i} provide,
Harris et. al~conjecture that 
$m_q(\lambda,\mu)$ is a power of $q$ \cite[Conjecture 7]{powersofq}, and as of yet, this conjecture remains an open problem.

As noted above, past work fixed $\lambda$ to be the highest root or the sum of the simple roots of a classical Lie algebra, while $\mu$ was either $0$ or a dominant positive root. 
In this work, we focus our study on
$\mathfrak{sl}_{r+1}(\mathbb{C})$,
fix $r \geq 1$, let $\hroot$ be the highest root, and consider $\mu \in \Phi^+$.
In this way, $\mu$ may be a nondominant weight.
It is known that whenever $\mu$ is a root, $m(\hroot,\mu)=1$. 
Hence, when $\mu$ is dominant, the pair $(\lambda,\mu)$ is given by Berenshte\u{\i}n and Zelevinski\u{\i} \cite{equalto1}. 
Moreover, we recall that the multiplicity is invariant under the action of the Weyl group. 
Namely, $m(\hroot,\mu)=m(\hroot,\sigma(\mu))$ for all $\sigma\in W$ \cite[Proposition 3.1.20]{GW}. 
However, when $\mu\in\Phi^+$ is nondominant, a characterization and enumeration of the elements in $\mathcal{A}(\hroot,\mu)$ was unknown.
This paper contributes to this literature by proving the following results. 

\begin{customthm}{2.1}
    Fix $1\leq i \leq j\leq r$ and let $\hroot$ be the highest root of $\mathfrak{sl}_{r+1}(\C)$. 
    Then $\sigma \in \calA(\hroot, \alpha_i+\alpha_{i+1}+\cdots+\alpha_j)$
    if and only if 
    $\sigma=1$ or $\sigma=s_{\ell_1} \cdots s_{\ell_k} s_{\ell_{k+1}} \cdots s_{\ell_t}$ for some collection of nonconsecutive integers
    $2\leq \ell_1, \ell_2, \ldots, \ell_k \leq i-1$ and $j+1 \leq \ell_{k+1}, \ldots, \ell_{t} \leq r-1$.
\end{customthm} 
From the characterization in Theorem \ref{Nonconsecutive_Integers}, we are able to provide an enumeration for the cardinality of the Weyl alternation set.

\begin{customcor}{2.5}
    Fix $1\leq i\leq j\leq r$ and let $\hroot$ be the highest root of $\mathfrak{sl}_{r+1}(\C)$.
    Then 
    \[|\calA(\hroot, \alpha_i + \cdots + \alpha_j)| = F_i \cdot F_{r-j+1},\]
    where $F_n$ denotes the $n^{\text{th}}$ Fibonacci number
    \cite[\href{https://oeis.org/A000045}{A000045}]{OEIS}.
\end{customcor}

Given the characterization and enumeration of the terms contributing nontrivially to the $q$-analog of Kostant's weight multiplicity formula, we prove the following result using purely combinatorial techniques.

\begin{customthm}{4.1}
    Fix $1\leq i \leq j\leq r$ and let $\hroot$ be the highest root of $\mathfrak{sl}_{r+1}(\C)$. 
    Then \[m_q(\tilde{\alpha}, \mu)= q^{r-h(\mu)},\]
    where 
$h(\mu)=j-i+1$ denotes the height of $\mu$.  
\end{customthm}

Observe that setting $q = 1$ in Theorem \ref{Main Conjecture} recovers the known result that the multiplicity of a
positive root in the adjoint representation is equal to one \cite[Theorem~2.4.1]{GW}.

We remark that the work in \cite{Broer,Brylinski, Anthony}, together imply that when $\hroot$ is the highest root, $\mu$ is a dominant weight, and $w$ is a Weyl group element, the jump polynomial associated to the weight $w(\mu)$ is equal to a power of $q$ times the $q$-analog of Kostant's weight multiplicity formula $m_q(\lambda,\mu)$. 
In such a case, $m(\tilde{\alpha}, \mu) =1$, and $m_q(\tilde{\alpha},\mu)$ is a power of $q$. 
Thus, providing an alternative proof of Theorem \ref{Main Conjecture}.
However, the proofs presented in the current are purely combinatorial.

This work is organized as follows:
In Section \ref{Weyl alternation Section}, we present the proof to Theorem \ref{Weyl element thm} showing that the cardinality of the Weyl alternation set is a product of Fibonacci numbers.
In Section \ref{q-analog of KPF Section}, we establish results related to the $q$-analog of Kostant's partition function. 
In Section \ref{Main Conjecture for q-analog Section}, we present a proof to Theorem \ref{Main Conjecture} showing that the $q$-analog of Kostant's weight multiplicity formula
of a positive root in the adjoint  representation simplifies down to a power of $q$.
\bigskip

\noindent \textbf{Acknowledgements.} 
The author thanks her adviser Dr.~Pamela E.~Harris for the introduction to the problem and feedback on an earlier draft. 
She also thanks Dr.~Jeb Willenbring for insightful comments throughout the progression of this manuscript. The author gratefully acknowledges funding from the University of Wisconsin - Milwaukee Department of Mathematical Sciences in support of this research.

\section{Weyl Alternation Sets}\label{Weyl alternation Section}

In \cite{Kevin}, they consider the case where $\lambda$ is the highest root and $\mu=0$.
We still let $\lambda$ be the highest root and denote it as $\hroot$. 
However, we let $\mu \in \Phi^+$.
The following result characterizes the elements in the set $\mathcal{A}(\hroot, \mu)$.
The following theorem is closely related to Theorem 2.1 in \cite{Kevin}, and we adapt the proof techniques for our purpose.

\begin{theorem}\label{Nonconsecutive_Integers}
    Fix $1\leq i \leq j\leq r$ and let $\hroot$ be the highest root of $\mathfrak{sl}_{r+1}(\C)$. 
    Then $\sigma \in \calA(\hroot, \alpha_i+\alpha_{i+1}+\cdots+\alpha_j)$
    if and only if 
    $\sigma=1$ or $\sigma=s_{\ell_1} \cdots s_{\ell_k} s_{\ell_{k+1}} \cdots s_{\ell_t}$ for some collection of nonconsecutive integers
    $2\leq \ell_1, \ell_2, \ldots, \ell_k \leq i-1$ and $j+1 \leq \ell_{k+1}, \ldots, \ell_{t} \leq r-1$.\label{Weyl element thm}
\end{theorem}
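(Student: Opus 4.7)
My plan is to rewrite $\wp(\sigma(\hroot+\rho)-\mu-\rho)>0$ as a system of partial-sum inequalities in the standard $\mathfrak{sl}_{r+1}(\C)$ coordinates, and then carry out a combinatorial analysis of which permutations $\tau:=\sigma^{-1}$ satisfy them. Working in the basis $e_1,\ldots,e_{r+1}$ with $\alpha_k=e_k-e_{k+1}$ and the representative $\rho=(r,r-1,\ldots,0)$, I would compute $\xi:=\hroot+\rho=(r+1,r-1,r-2,\ldots,1,-1)$ and $\eta_k:=\rho_k+[k=i]-[k=j+1]$. Since a type-$A_r$ root-lattice vector lies in $\Z_{\geq 0}\langle\Phi^+\rangle$ iff its left-to-right partial sums are all nonnegative, $\sigma\in\calA(\hroot,\mu)$ translates into
\[
    \sum_{m\in T_K}\xi_m \;\geq\; \sum_{k=1}^{K}\eta_k,\qquad K=1,\ldots,r,
\]
where $T_K:=\tau(\{1,\ldots,K\})$. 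Setting $d_K:=\sum_{k=1}^{K}\rho_k-\sum_{m\in T_K}\rho_m\geq 0$ (nonnegative because $\rho_k=r+1-k$ is strictly decreasing), this rearranges to $d_K\leq[1\in T_K]-[r+1\in T_K]-[K\in[i,j]]$.

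I would next peel off the forced middle range. For $K\in[i,j]$ the right-hand side is $[1\in T_K]-[r+1\in T_K]-1\leq 0$, which together with $d_K\geq 0$ forces $1\in T_K$, $r+1\notin T_K$, and $d_K=0$; strict monotonicity of $\rho$ then gives $T_K=\{1,\ldots,K\}$ for every such $K$. Consequently $\tau$ fixes each of $i+1,\ldots,j$ and splits into a pair $(\tau_1,\tau_2)$ where $\tau_1$ permutes $\{1,\ldots,i\}$ and $\tau_2$ permutes $\{j+1,\ldots,r+1\}$.

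With $T_i=\{1,\ldots,i\}$ in hand, $T_K\subseteq[i]$ for $K\leq i-1$, and the inequality there reduces to $d_K\leq[1\in T_K]$. Using the recursion $d_K=d_{K-1}+(\tau(K)-K)$, a direct check shows that the only subsets compatible with $d_K\leq 1$ and $1\in T_K$ are $T_K=[K]$ (``state $0$'') and $T_K=[K-1]\cup\{K+1\}$ (``state $1$''), and the admissible transitions are $0\to 0$, $0\to 1$, and $1\to 0$, with $1\to 1$ forbidden. Since the trajectory starts in state $0$ at $K=0$, ends in state $0$ at $K=i$, and must avoid state $1$ at $K=1$ (which would give $\tau_1(1)=2$, contradicting $1\in T_1$), the positions $K\in[2,i-1]$ where $\tau_1(K)=K+1$ form a nonconsecutive subset $S_1$, and $\tau_1=\prod_{\ell\in S_1}s_\ell$. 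The mirror analysis on $K\in[j+1,r]$ produces $\tau_2=\prod_{\ell\in S_2}s_\ell$ for some $S_2\subseteq[j+1,r-1]$ nonconsecutive; since $\tau_1$ and $\tau_2$ act on disjoint index sets, $\sigma=\tau^{-1}$ has the claimed form.

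The \emph{if} direction is then a one-line check: for $\sigma=\prod_{\ell\in S}s_\ell$ with $S\subseteq[2,i-1]\cup[j+1,r-1]$ nonconsecutive (including $S=\emptyset$, giving $\sigma=1$), the reflections commute and each $s_\ell$ fixes $\hroot$ (because $\ell\in\{2,\ldots,r-1\}$), so $\sigma(\hroot+\rho)-\mu-\rho=\hroot-\mu-\sum_{\ell\in S}\alpha_\ell$, and every $\alpha_\ell$ in the sum sits at an index where $\hroot-\mu$ already has coefficient $1$. I expect the main obstacle to be the bookkeeping in the third paragraph: showing that no exchange of ``size'' $\geq 2$ is compatible with $d_K\leq 1$, tracking the state transitions cleanly as $K$ grows, and verifying that the degenerate boundary cases $i=1$, $j=r$, and $i=j$ all match the stated characterization.
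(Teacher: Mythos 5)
Your proof is correct, but it takes a genuinely different route from the paper. The paper argues the forward direction by contradiction: it verifies directly that $s_1$ (and, by ``similar arguments,'' $s_r$, $s_y$ for $i\leq y\leq j$, and any consecutive product $s_ns_{n+1}$) yields a partition-function value of zero, and then invokes the subword lemma \cite[Proposition 3.4]{Lauren} --- if $\sigma\notin\calA(\hroot,\mu)$ then no element containing $\sigma$ in a reduced expression lies in $\calA(\hroot,\mu)$ --- to rule out everything except the claimed elements; the reverse direction is the same coefficient check you give. Your argument instead translates $\wp(\sigma(\hroot+\rho)-\mu-\rho)>0$ into partial-sum inequalities on the permutation $\tau=\sigma^{-1}$ and classifies the admissible $\tau$ by a two-state transfer analysis. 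What this buys you: the argument is self-contained (no appeal to the external subword lemma, and no implicit claim that every permutation outside the target set admits a reduced word exhibiting one of the forbidden patterns --- the one step the paper leaves somewhat informal), and it handles all the ``bad'' generators uniformly rather than case by case. What it costs is the bookkeeping you flag: one must verify that $d_K\leq 1$ with $\rho_k=r+1-k$ strictly decreasing by $1$ forces $T_K\in\{[K],\,[K-1]\cup\{K+1\}\}$, that the $1\to 1$ transition is impossible since $T_{K-1}\subseteq T_K$, and that the window endpoints ($K=1$, $K=i$, $K=r$, and the forced fixing of $r+1$) exclude excursions outside $[2,i-1]\cup[j+1,r-1]$; I checked these and the degenerate cases $i=1$, $j=r$, $i=j$, and they all go through. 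As a bonus, your analysis recovers the enumeration in Corollary 2.5 essentially for free, since the admissible $\tau$ are exactly products of commuting transpositions indexed by nonconsecutive subsets of the two windows.
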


\begin{proof}
    ($\Rightarrow$) We proceed by contradiction and assume $\sigma \neq 1$ or $\sigma \neq s_{\ell_1} \cdots s_{\ell_k} s_{\ell_{k+1}} \cdots s_{\ell_t}$ for some collection of nonconsecutive integers $2\leq \ell_1, \ell_2, \ldots, \ell_k \leq i-1$ and $j+1 \leq \ell_{k+1}, \ldots, \ell_{t} \leq r-1$.
    Then $\sigma$ must contain either $s_1$, $s_r$, $s_y$ with $i\leq y\leq j$, or some product of $s_n$ and $s_{n+1}$ for some integer $1\leq n\leq r$.
     
    Let $\sigma=s_1$.
    Then
        \begin{align*}
            \wp(s_1(\hroot + \rho) - \rho - (\alpha_i + \alpha_{i+1}+ \cdots + \alpha_j)) &= \wp(\hroot - \alpha_1 + \rho - \alpha_1 - \rho - (\alpha_i + \alpha_{i+1}+ \cdots + \alpha_j))\nonumber\\
            &= \wp(\hroot - 2\alpha_1 - (\alpha_i + \alpha_{i+1}+ \cdots + \alpha_j))\nonumber\\
            &= \wp(-\alpha_1 +\alpha_2 + \cdots + \alpha_{i-1} + \alpha_{j+1}+ \cdots + \alpha_r)\nonumber\\
            &=0
        \end{align*}
    since $-\alpha_1 +\alpha_2 + \cdots + \alpha_{i-1} + \alpha_{j+1}+ \cdots + \alpha_r$ cannot be written as a sum of positive roots since the coefficient for $\alpha_1$ is negative.
    Thus, we get a contradiction, and $s_1 \notin \calA(\hroot, \alpha_i+\alpha_{i+1}+\cdots+\alpha_j)$.
    By \cite[Proposition 3.4]{Lauren}, if $\sigma \notin \calA(\hroot, \alpha_i+\alpha_{i+1}+\cdots+\alpha_j)$, then neither is any $\sigma'\in W$ that contains $\sigma$ in its reduced word expression.
    Thus, $\sigma \in W$ containing $s_1$ in its reduced word expression cannot be in $\calA(\hroot, \alpha_i+\alpha_{i+1}+\cdots+\alpha_j)$.
    This reaches a contradiction to the assumption that $\sigma\in\calA(\hroot,\alpha_i+\cdots+\alpha_j)$.

    A similar argument is made for when $\sigma$ contains either $s_r$, $s_y$ with $i\leq y\leq j$, or 
    some product of $s_n$ and $s_{n+1}$ for some integer $1\leq n\leq r$.
 
We have shown that if $\sigma\in \calA(\hroot,\alpha_i+\cdots+\alpha_j)$, then $\sigma$ cannot contain $s_1$, $s_r$, $s_k$ with $i\leq k\leq j$, nor any product of consecutive $s_n$ and $s_{n+1}$ with $1\leq n\leq r-1$. 
Thus, the only remaining possibilities are that $\sigma=1$ or that $\sigma=s_{\ell_1} \cdots s_{\ell_k} s_{\ell_{k+1}} \cdots s_{\ell_t}$ for some collection of nonconsecutive integers
    $2\leq \ell_1, \ell_2, \ldots, \ell_k \leq i-1$ and $j+1 \leq \ell_{k+1}, \ldots, \ell_{t} \leq r-1$ as desired.

\noindent($\Leftarrow$) It suffices to consider the following two cases.

\noindent\textbf{Case 1}: If $\sigma=1$, then
        \begin{align*}
            \wp(1(\hroot+\rho)-\rho-(\alpha_i+\alpha_{i+1}+\cdots+\alpha_j)) &= \wp(\hroot - (\alpha_i+\alpha_{i+1}+\cdots+\alpha_j))\\
            &= \wp(\alpha_1 + \cdots +\alpha_{i-1}+\alpha_{j+1}+\cdots +\alpha_{r})\\
            &>0
        \end{align*}
    since $\alpha_1 + \cdots +\alpha_{i-1}+\alpha_{j+1}+\cdots +\alpha_{r}$ can be written as a sum of positive roots.
    Thus, \newline $\sigma=1 \in \calA(\hroot, \alpha_i+\alpha_{i+1}+\cdots + \alpha_j)$.
    
\noindent\textbf{Case 2}: Let $\sigma=s_{\ell_1} \cdots s_{\ell_k} s_{\ell_{k+1}} \cdots s_{\ell_t}$ where for some collection of nonconsecutive integers \newline
    $2\leq \ell_1, \ell_2, \ldots, \ell_k \leq i-1$ 
    and $j+1 \leq \ell_{k+1}, \ldots, \ell_{t} \leq r-1$.
    Then
    \begin{align}
            \wp(s_{\ell_1}\cdots s_{\ell_k} s_{\ell_{k+1}}\cdots s_{\ell_t}&(\hroot + \rho)-\rho - (\alpha_i + \alpha_{i+1}+\cdots + \alpha_{j}))\nonumber\\
            &=
            \wp\left(\hroot +\rho- 
        \sum_{h=1}^t\alpha_{\ell_h}
            -\rho-(\alpha_i + \alpha_{i+1}+\cdots + \alpha_{j})\right)\nonumber\\
             &=
             \wp\left(\hroot - (\alpha_i + \alpha_{i+1}+\cdots + \alpha_{j}) - \sum_{h=1}^t\alpha_{\ell_h}\right).\label{eq:subtract large sum}
    \end{align}
    Notice that 
    $\{i,i+1,\ldots, j\}\cap \{\ell_1,\ell_2,\ldots, \ell_t\}=\emptyset$.
    If $x\in\{i,i+1,\ldots,j\}\cup\{\ell_1,\ell_2,\ldots, \ell_t\}$, then in equation \eqref{eq:subtract large sum}, the coefficient of $\alpha_x$ is exactly 0.
    If $x\notin \{i,i+1,\ldots,j\}\cup\{\ell_1,\ell_2,\ldots, \ell_t\}$, then in equation \eqref{eq:subtract large sum}, the coefficient of $\alpha_x$ is exactly 1.
    Hence, $\hroot - (\alpha_i + \alpha_{i+1}+\cdots + \alpha_{j}) - \sum_{h=1}^t\alpha_{\ell_h}$, can be written as a sum of positive roots.
    Therefore, $s_{\ell_1} \cdots s_{\ell_k} s_{\ell_{k+1}} \cdots s_{\ell_t} \in \calA(\hroot, \alpha_i+\alpha_{i+1}+\cdots + \alpha_j)$.
\end{proof}

Having characterized the elements of $\calA(\hroot,\alpha_i+\cdots+\alpha_j)$ for all $1\leq i\leq j\leq r$, we now consider the cardinality of the set. To do so requires the following definition and enumerative result.

We begin by defining the sequence of numbers commonly referred to as Fibonacci \cite{sigler2003fibonacci} but was first introduced in the literature by Pingala \cite{Pingala}. 

\begin{definition}
    For any $n\geq 3$, define the \textbf{Fibonacci number}, denoted $F_n$, by the recursive formula: \[F_{n}=F_{n-1}+F_{n-2},\] with seed values $F_1=F_2=1$.
\end{definition}
 
We utilize the following enumerative result in later results.

\begin{proposition}
    The number of ways to select a subset of $[n]:=\{1,2,3,\ldots, n\}$ consisting of nonconsecutive numbers is given $F_{n+2}$, where $F_i$ is the $i^\text{th}$ Fibonacci number.
\label{Fib. Theorem}
\end{proposition}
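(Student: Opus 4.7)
The plan is to prove this classical identity by induction on $n$, using a simple bijective/case-split argument to establish a Fibonacci recurrence. Let $a_n$ denote the number of subsets of $[n]$ containing no two consecutive integers.

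First I would handle the base cases. For $n=1$, the valid subsets are $\emptyset$ and $\{1\}$, giving $a_1 = 2 = F_3$. For $n=2$, the valid subsets are $\emptyset$, $\{1\}$, and $\{2\}$ (note that $\{1,2\}$ is excluded), giving $a_2 = 3 = F_4$. It may also be convenient to set $a_0 = 1 = F_2$ corresponding to the empty set.

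Next, for the inductive step with $n \geq 3$, I would partition the valid subsets $S \subseteq [n]$ based on whether $n \in S$. If $n \notin S$, then $S$ is any valid nonconsecutive subset of $[n-1]$, contributing $a_{n-1}$ such subsets. If $n \in S$, then $n-1 \notin S$, so $S \setminus \{n\}$ is a valid nonconsecutive subset of $[n-2]$, contributing $a_{n-2}$ such subsets. These cases are disjoint and exhaustive, so
\begin{equation*}
a_n = a_{n-1} + a_{n-2}.
\end{equation*}
Applying the inductive hypothesis gives $a_n = F_{n+1} + F_n = F_{n+2}$ by the defining Fibonacci recurrence, completing the proof.

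There is no real obstacle here; the argument is a textbook use of case-splitting on the maximum element. The only thing that requires a small check is the indexing of the Fibonacci numbers, since the convention $F_1 = F_2 = 1$ fixed in the preceding definition forces the shift $a_n = F_{n+2}$ rather than $F_{n+1}$ or $F_n$, and the base case computations above pin this down correctly.
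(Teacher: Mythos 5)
Your proof is correct and is essentially the same argument as the paper's: both establish the recurrence $a_n = a_{n-1} + a_{n-2}$ by splitting the nonconsecutive subsets according to whether they contain the maximum element, and both pin down the index shift $a_n = F_{n+2}$ via the small base cases. Your version is, if anything, slightly more careful in verifying two base cases for the two-step recurrence, whereas the paper phrases the step as strong induction on $[n+1]$; the content is identical.
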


\begin{proof}
We proceed by induction. 
For $n=1$, the only options are to select no integers or to select $1$.
There are two such choices and note $F_{1+2}=F_3=2$, and more over $F_3=F_2+F_1$, as claimed.
Assume that for all $k\leq n$ the number of ways to select a subset of $[k]$ consisting of only nonconsecutive integers is given by $F_{k+2}$.
Now let $k=n+1$.
Note that for any subset of $[n-1]$  consisting of nonconsecutive integers, we can include the integer $n+1$ to get a subset of $[n+1]$  consisting of nonconsecutive integers. 
Then by induction, there are $F_{n-1+2}=F_{n+1}$ many such subsets, all of which contain the integer $n+1$.
Moreover, any subset of $[n]$ consisting of nonconsecutive integers is also a subset of $[n+1]$ with the same property and does not contain $n+1$.
By induction hypothesis, there are $F_{n+2}$ such subsets. 
As these subsets are disjoint (since they either contain $n+1$ or not), then we have shown that the number of subsets of $[n+1]$ consisting of nonconsecutive integers is given by $F_{n+2}+F_{n+1}$, which is equal to $F_{n+3}$, as desired.
\end{proof}
In the following section, we use the enumerative result Harris gives in \cite{HarrisThesis} 
which counts subsets of $k$ nonconsecutive integers from the set $[n]$.

\begin{lemma}\label{Count nonconsecutive}
    The number of ways to select $k$ nonconsecutive integers from the set $[n]$ is 
    $\binom{n+1-k}{k}.$
\end{lemma}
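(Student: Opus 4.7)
The plan is to give a bijective proof that counts $k$-element subsets of $[n]$ with no two elements consecutive by identifying them with arbitrary $k$-element subsets of $[n+1-k]$. The core idea is the classical ``gap-compression'' bijection: given a nonconsecutive selection $a_1 < a_2 < \cdots < a_k$ of elements from $[n]$, one shifts each $a_i$ down by $i-1$.

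First I would set up the map explicitly. Define
\[
\phi\colon \{a_1 < a_2 < \cdots < a_k\} \longmapsto \{a_1, \; a_2 - 1, \; a_3 - 2, \; \ldots, \; a_k - (k-1)\},
\]
on the domain consisting of all $k$-element subsets $\{a_1 < \cdots < a_k\} \subseteq [n]$ with $a_{i+1} - a_i \geq 2$ for $1 \leq i \leq k-1$. I would verify that $\phi$ lands inside the set of $k$-element subsets of $[n+1-k]$: setting $b_i = a_i - (i-1)$, the nonconsecutiveness condition $a_{i+1} - a_i \geq 2$ translates to $b_{i+1} - b_i \geq 1$, so the $b_i$ are strictly increasing; moreover $b_1 = a_1 \geq 1$ and $b_k = a_k - (k-1) \leq n - (k-1) = n+1-k$. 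Thus $\phi$ produces a valid $k$-subset of $[n+1-k]$.

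Next, I would construct the inverse $\psi$ by $\psi(\{b_1 < \cdots < b_k\}) = \{b_1, \; b_2+1, \; \ldots, \; b_k + (k-1)\}$ and check that, for any $k$-subset of $[n+1-k]$, the image lies in $[n]$ (since $b_k + (k-1) \leq n+1-k+k-1 = n$) and is nonconsecutive (since $(b_{i+1} + i) - (b_i + (i-1)) = b_{i+1} - b_i + 1 \geq 2$). A quick check shows $\phi \circ \psi$ and $\psi \circ \phi$ are the respective identities.

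Since $\phi$ is a bijection, the count of nonconsecutive $k$-subsets of $[n]$ equals $\binom{n+1-k}{k}$, which is the conclusion. There is essentially no obstacle in this proof; the only subtlety is being careful about the endpoint $b_k \leq n+1-k$ (so that the shift does not push the largest element out of range), and this is handled by the explicit inequality above. An alternative route would be induction on $n$, splitting on whether $n$ is chosen or not, which recovers the Fibonacci-like identity $\binom{n+1-k}{k} = \binom{n-k}{k} + \binom{n-k}{k-1}$ and matches Proposition \ref{Fib. Theorem} when summed over $k$; I would mention this as a consistency check but favor the bijection for brevity.
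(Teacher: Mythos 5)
Your proof is correct and complete: the gap-compression bijection $a_i \mapsto a_i - (i-1)$ is the standard argument for this count, and you verify both directions of the bijection and the range constraints carefully. Note that the paper itself gives no proof of this lemma --- it only cites Harris's thesis --- so your argument actually supplies a self-contained justification that the paper omits; there is nothing in the paper to compare it against, and the bijective route you chose is the canonical one.
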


Together, Proposition \ref{Fib. Theorem} and Lemma \ref{Count nonconsecutive} imply the combinatorial identity
\[\sum_{k\geq 0}\binom{n+1-k}{k}=F_{n+2}.\]

We are now able to enumerate the elements of $\calA(\hroot,\mu)$ for any $\mu\in\Phi^+$.

\begin{corollary}\label{Cardinality of Fibonacci} 
    Fix $1\leq i\leq j\leq r$ and let $\hroot$ be the highest root of $\mathfrak{sl}_{r+1}(\C)$.
    Then 
    \[|\calA(\hroot, \alpha_i + \cdots + \alpha_j)| = F_i \cdot F_{r-j+1},\]
    where $F_n$ denotes the $n^{\text{th}}$ Fibonacci number.
\end{corollary}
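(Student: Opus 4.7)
The plan is to count directly the elements listed in Theorem \ref{Nonconsecutive_Integers}. By that characterization, each $\sigma \in \calA(\hroot,\alpha_i+\cdots+\alpha_j)$ is uniquely determined by its (unordered) multiset of indices $\{\ell_1,\ldots,\ell_t\}$, which must be a subset of nonconsecutive integers chosen from $\{2,\ldots,i-1\}\cup\{j+1,\ldots,r-1\}$, with the empty choice corresponding to $\sigma=1$. So the entire problem collapses to counting such subsets.

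First I would observe that the two index ranges $\{2,\ldots,i-1\}$ and $\{j+1,\ldots,r-1\}$ are separated by a gap of size at least $2$. Indeed, since $i\leq j$, we have $(j+1)-(i-1)=j-i+2\geq 2$, so no element of the left range is consecutive to any element of the right range. Consequently, a subset of nonconsecutive integers from the union is precisely a pair consisting of a nonconsecutive subset of $\{2,\ldots,i-1\}$ together with an independently chosen nonconsecutive subset of $\{j+1,\ldots,r-1\}$. This factors the count as a product.

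Second, I would apply Proposition \ref{Fib. Theorem} to each factor after a harmless re-indexing (the property of being nonconsecutive depends only on the size of the ambient range, not its labeling). The range $\{2,\ldots,i-1\}$ has $i-2$ elements, contributing $F_{(i-2)+2}=F_i$ subsets; the range $\{j+1,\ldots,r-1\}$ has $r-1-j$ elements, contributing $F_{(r-1-j)+2}=F_{r-j+1}$ subsets. Multiplying gives $|\calA(\hroot,\mu)|=F_i\cdot F_{r-j+1}$.

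The only thing to double-check is the boundary behavior when $i\in\{1,2\}$ or $j\in\{r-1,r\}$, in which case one or both ranges are empty. In these cases Proposition \ref{Fib. Theorem} has to be interpreted (or extended) for a zero-element range, which yields exactly one subset (the empty one), matching $F_1=F_2=1$. This is a routine sanity check rather than a real obstacle; the entire argument is essentially a bookkeeping consequence of Theorem \ref{Nonconsecutive_Integers} combined with the Fibonacci count of nonconsecutive subsets, so I do not anticipate any substantive difficulty.
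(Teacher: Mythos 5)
Your proposal is correct and follows essentially the same route as the paper: invoke Theorem \ref{Nonconsecutive_Integers}, count nonconsecutive subsets of $\{2,\ldots,i-1\}$ and $\{j+1,\ldots,r-1\}$ separately via Proposition \ref{Fib. Theorem} after re-indexing, and multiply. Your explicit remarks that the gap $j-i+2\geq 2$ justifies the product decomposition and that the empty-range boundary cases give $F_1=F_2=1$ are small refinements the paper leaves implicit, but they do not change the argument.
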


\begin{proof}
Let $1\leq i\leq j\leq r$.
By Theorem \ref{Weyl element thm}, we know $\sigma \in \calA(\hroot, \alpha_i+\alpha_{i+1}+\cdots+\alpha_j)$
if and only if 
$\sigma=1$ or $\sigma=s_{\ell_1} \cdots s_{\ell_k} s_{\ell_{k+1}} \cdots s_{\ell_t}$ for some collection of nonconsecutive integers 
$2\leq \ell_1, \ell_2, \ldots, \ell_k \leq i-1$ and $j+1 \leq \ell_{k+1}, \ldots, \ell_{t} \leq r-1$.

Selecting a subset consisting of nonconsecutive integers from the set $\{2, \ldots, i-1\}$
is the same as selecting a subset consisting of nonconsecutive integers from the set $\{1, \ldots, i-2\}$. 
By Proposition~\ref{Fib. Theorem}, there are $F_i$ such subsets.
The same process can be done for 
selecting a subset consisting of nonconsecutive integers from the set $\{j+1, \ldots, r-1\}$ to get $F_{r-j+1}$ such subsets.

The choices for subsets  
$\{2, \ldots, i-1\}$ and $\{j+1, \ldots, r-1\}$ 
are disjoint and independent of each other. 
Thus, the total number of ways to construct an element of $\calA(\hroot, \alpha_i+\cdots+\alpha_j)$
is given by the product  $F_{i}\cdot F_{r-j+1}$.
Therefore, $|\calA(\hroot, \alpha_i + \cdots + \alpha_j)| = F_i \cdot F_{r-j+1}$.
\end{proof}

\section{The \texorpdfstring{$q$}{q}-Analog of Kostant's  Partition Function}\label{q-analog of KPF Section}
 
Now we are going to give formulas for the value of the $q$-analog of Kostant's partition function when the input is $\sigma(\hroot+\rho)-\rho-\mu$ with $\mu\in \Phi^+$ and $\sigma\in\calA(\hroot,\mu)$. 
To begin, we give some counts for the number of $\sigma\in\calA(\hroot,\mu)$ with a fixed length.

\begin{lemma}\label{Sum for r-j}
    If $\mu = \alpha_1 + \cdots + \alpha_j$ with $1\leq j \leq r-1$, then 
\begin{align}
    |\{\sigma \in \calA(\lambda, \mu) \;| \;\ell(\sigma)=k \mbox{ and $\sigma$ contains $s_{j+1}$}\}|&=\binom{r-j-2-k}{k} \text{and}\\
    |\{\sigma \in \calA(\lambda, \mu) \;|\; \ell(\sigma)=k \mbox{ and $\sigma$ does not contain $s_{j+1}$}\}|&=\binom{r-j-1-k}{k}.
\end{align}
If $\mu = \alpha_i + \cdots + \alpha_r$ with $1< i \leq r$, then 
\begin{align}
    |\{\sigma \in \calA(\lambda, \mu) \;| \;\ell(\sigma)=k \mbox{ and $\sigma$ contains $s_{i-1}$}\}|&=
    \binom{i-3-k}{k} \text{and}\\
    |\{\sigma \in \calA(\lambda, \mu) \;|\; \ell(\sigma)=k \mbox{ and $\sigma$ does not contain $s_{i-1}$}\}|&=
    \binom{i-2-k}{k}.
\end{align}

\end{lemma}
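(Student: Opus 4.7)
The approach is to combine the structural description of $\calA(\hroot,\mu)$ from Theorem~\ref{Weyl element thm} with the nonconsecutive subset count in Lemma~\ref{Count nonconsecutive}. In the first case, $\mu = \alpha_1+\cdots+\alpha_j$ corresponds to $i=1$, so the first index range $\{2,\ldots,i-1\}$ appearing in Theorem~\ref{Weyl element thm} is empty. Hence every $\sigma \in \calA(\hroot,\mu)$ is either $\sigma=1$ or $\sigma = s_{\ell_1} \cdots s_{\ell_t}$ where $\{\ell_1, \ldots, \ell_t\}$ is a nonconsecutive subset of $\{j+1, \ldots, r-1\}$. Because nonadjacent simple reflections in type $A$ commute and no two adjacent reflections can appear in such a product, the expression is automatically reduced, so $\ell(\sigma) = t$, and specifying a $\sigma$ of length $k$ is equivalent to picking $k$ nonconsecutive integers from $\{j+1, \ldots, r-1\}$.

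Next I would split on whether $s_{j+1}$ is one of the factors of $\sigma$. If $\sigma$ contains $s_{j+1}$ and $\ell(\sigma)=k$, then $j+1$ is fixed among the chosen indices, and the remaining $k-1$ indices must form a nonconsecutive subset of $\{j+3, \ldots, r-1\}$, since $j+2$ is forbidden by the nonconsecutiveness condition. An application of Lemma~\ref{Count nonconsecutive} to this set then yields the count. If instead $\sigma$ does not contain $s_{j+1}$, then all $k$ chosen indices form a nonconsecutive subset of $\{j+2, \ldots, r-1\}$, and a second application of Lemma~\ref{Count nonconsecutive} yields the count.

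For the second case, $\mu = \alpha_i+\cdots+\alpha_r$ corresponds to $j=r$, so now the second range $\{j+1, \ldots, r-1\}$ in Theorem~\ref{Weyl element thm} is empty and the roles of the two ranges are swapped. A symmetric argument, with $s_{i-1}$ playing the role of $s_{j+1}$ and the ambient sets $\{2, \ldots, i-3\}$ and $\{2, \ldots, i-2\}$ replacing $\{j+3, \ldots, r-1\}$ and $\{j+2, \ldots, r-1\}$, respectively, handles the remaining two identities.

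The main obstacle I anticipate is purely bookkeeping: carefully tracking the shifts in the binomial coefficients so that they match the claimed formulas, and confirming that the boundary cases (when one of the ambient sets is empty or formally has negative size) make the binomial coefficients vanish as expected. A secondary but worth-stating point is the initial observation that a product of simple reflections indexed by a nonconsecutive set is automatically reduced in type $A$, which is what justifies identifying $\ell(\sigma)$ with the number of chosen indices.
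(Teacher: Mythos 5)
Your overall strategy is exactly the paper's: reduce to the characterization in Theorem~\ref{Weyl element thm}, note that a product of simple reflections with pairwise nonconsecutive indices is automatically reduced so that $\ell(\sigma)$ equals the number of factors, split on whether $s_{j+1}$ (resp.\ $s_{i-1}$) occurs, and count nonconsecutive subsets via Lemma~\ref{Count nonconsecutive} after a reindexing. The two ``does not contain'' identities go through exactly as you describe: choosing $k$ nonconsecutive integers from $\{j+2,\dots,r-1\}$ (a set of size $r-j-2$) gives $\binom{r-j-1-k}{k}$, and from $\{2,\dots,i-2\}$ gives $\binom{i-2-k}{k}$.

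However, the bookkeeping you explicitly deferred is precisely where the argument does not close. In the ``contains $s_{j+1}$'' case with $\ell(\sigma)=k$, your (literal, and internally correct) setup chooses the remaining $k-1$ indices as a nonconsecutive subset of $\{j+3,\dots,r-1\}$, a set of size $r-j-3$; Lemma~\ref{Count nonconsecutive} then gives $\binom{r-j-1-k}{k-1}$, which is \emph{not} the stated $\binom{r-j-2-k}{k}$. For instance, with $r-j=5$ and $k=1$ the only such $\sigma$ is $s_{j+1}$ itself, so the count is $1$, while the stated formula gives $\binom{2}{1}=2$. The stated formula is your answer with $k$ replaced by $k+1$: $\binom{r-j-2-k}{k}$ counts the elements containing $s_{j+1}$ of length $k+1$, not $k$. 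The paper's own proof silently makes this substitution---it selects $k$ (not $k-1$) further indices from $\{j+3,\dots,r-1\}$---and that off-by-one convention is the one actually used when the lemma is applied in the proof of Theorem~\ref{Main Conjecture}, where the $Y_{j+1}$ and $Y_{i-1}$ sums carry the factors $(-1)^{k+1}q^{k+1}$. The same shift occurs in the fourth identity, where your reading yields $\binom{i-2-k}{k-1}$ rather than $\binom{i-3-k}{k}$. So, as written, the computation you describe would not arrive at the displayed formulas; to finish you must either reinterpret $k$ in the two ``contains'' cases as $\ell(\sigma)-1$ (matching the paper's later usage) or record that the statement is off by one in those two cases.
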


\begin{proof}
    Let $\mu = \alpha_1 + \cdots + \alpha_j$ with $1\leq j < r$. 
    Let $\sigma$ contain $s_{j+1}$.
    Then we select $k$ nonconsecutive integers from the set $\{j+3, \cdots, r-1\}$.
    Re-index the elements to be the set $\{1, \ldots, r-j-3\}$.
    By Lemma \ref{Count nonconsecutive}, we get $\binom{r-j-2-k}{k}$.
    Thus, $|\{\sigma \in \calA(\lambda, \mu) \;| \;\ell(\sigma)=k \mbox{ and $\sigma$ contains $s_{j+1}$}\}|=\binom{r-j-2-k}{k}$.\\
    Let $\sigma$ not contain the element $s_{j+1}$.
    Then we select $k$ nonconsecutive integers from the set\\ $\{j+2,\ldots, r-1\}$.
    Re-index the elements to get the set $\{1, \ldots, r-j-2\}$.
    By Lemma \ref{Count nonconsecutive}, we get $\binom{r-j-1-k}{k}$.
    Thus, $|\{\sigma \in \calA(\lambda, \mu) \;|\; \ell(\sigma)=k \mbox{ and $\sigma$ does not contain $s_{j+1}$}\}|=\binom{r-j-1-k}{k}$.

The proof for the case where $\mu = \alpha_i + \cdots + \alpha_r$ with $1< i \leq r$ is analogous.
\end{proof}

\begin{lemma}\label{bound for r-j}
   If $\mu = \alpha_1 + \cdots + \alpha_j$ with $1\leq j \leq r-1$, then
   \begin{align}
    \max\{\sigma \in \calA(\lambda, \mu) \;| \;\ell(\sigma)=k \mbox{ and $\sigma$ contains $s_{j+1}$}\}&= \left\lfloor \frac{r-j-2}{2} \right\rfloor\mbox{   and} \nonumber\\
    \max\{\sigma \in \calA(\lambda, \mu) \;|\; \ell(\sigma)=k \mbox{ and $\sigma$ does not contain $s_{j+1}$}\}&=\left\lfloor \frac{r-j-1}{2} \right\rfloor \nonumber
    \end{align}
       If $\mu = \alpha_i + \cdots + \alpha_r$ with $2 \leq i \leq r$, then
   \begin{align}
    \max\{\sigma \in \calA(\lambda, \mu) \;| \;\ell(\sigma)=k \mbox{ and $\sigma$ contains $s_{i-1}$}\}&= \left\lfloor \frac{i-3}{2} \right\rfloor\mbox{      and} \nonumber\\
    \max\{\sigma \in \calA(\lambda, \mu) \;|\; \ell(\sigma)=k \mbox{ and $\sigma$ does not contain $s_{i-1}$}\}&=\left\lfloor \frac{i-2}{2} \right\rfloor. \nonumber
\end{align}
\end{lemma}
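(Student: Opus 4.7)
I read the lemma as asserting that the largest value of $k$ for which the set $\{\sigma \in \calA(\hroot,\mu) : \ell(\sigma)=k \text{ and } \sigma \text{ contains } s_{j+1}\}$ (and analogously the other three sets) is nonempty equals the displayed floor expression. Under this reading, the result follows almost immediately from Lemma \ref{Sum for r-j}, since that lemma has already identified each of the four cardinalities in question as a single binomial coefficient. The task therefore reduces to finding, for each such expression, the largest $k$ for which the binomial coefficient is strictly positive.

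The plan is to handle the case $\mu = \alpha_1 + \cdots + \alpha_j$ with $\sigma$ containing $s_{j+1}$ first. By Lemma \ref{Sum for r-j}, the cardinality in question equals $\binom{r-j-2-k}{k}$. Using the standard fact that $\binom{n}{k}$ is positive precisely when $n \geq k \geq 0$, I need $r-j-2-k \geq k$, equivalently $k \leq \tfrac{r-j-2}{2}$, and taking the integer floor yields the claimed $k_{\max} = \lfloor (r-j-2)/2 \rfloor$. The remaining three sub-cases are completely parallel: from the counts $\binom{r-j-1-k}{k}$, $\binom{i-3-k}{k}$, and $\binom{i-2-k}{k}$ in Lemma \ref{Sum for r-j}, the same inequality argument produces $\lfloor (r-j-1)/2 \rfloor$, $\lfloor (i-3)/2 \rfloor$, and $\lfloor (i-2)/2 \rfloor$, respectively.

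There is no real obstacle here; the content is already packaged in Lemma \ref{Sum for r-j}, and only a one-line elementary inequality is required in each of the four cases. The only minor bookkeeping is at the boundary: when $r-j-2 < 0$ or $i-3 < 0$ the floor expression is negative, in which case the underlying binomial coefficient vanishes for every $k \geq 0$ and the asserted ``maximum'' is to be interpreted vacuously, consistent with the convention used elsewhere in the paper. Because the argument is essentially a direct corollary of the previous lemma, the proof is just a few lines of arithmetic and can be stated without further combinatorial input.
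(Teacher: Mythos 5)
Your proof is correct and rests on essentially the same combinatorics as the paper's: the paper identifies the admissible index sets (e.g.\ $\{j+3,\dots,r-1\}$, re-indexed to $\{1,\dots,r-j-3\}$) and directly observes that the maximum number of nonconsecutive integers selectable from $\{1,\dots,m\}$ is $\lfloor (m+1)/2\rfloor$, whereas you obtain the same bound by asking when the binomial coefficient $\binom{r-j-2-k}{k}$ (etc.) from Lemma~\ref{Sum for r-j} is positive. Both routes are valid and give the same four floors; your remark about the vacuous boundary cases (e.g.\ $j=r-1$, where the set of $\sigma$ containing $s_{j+1}$ is empty) is a detail the paper leaves implicit.
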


\begin{proof}
    Let $\mu = \alpha_1 + \cdots + \alpha_j$ with $1\leq j \leq r-1$ and $\ell(\sigma)=k$.
    Suppose $\sigma$ contains $s_{j+1}$.
    Then the smallest 
    integer greater
    than and nonconsecutive to $j+1$ that can be selected is $j+3$.
    Then we select $k$ nonconsecutive integers from the set $\{j+3, \ldots, r-1\}$, which is the same count as if we select $k$ nonconsecutive integers from the set to get $\{1,\ldots, r-j-3\}$.
    The maximum number of nonconsecutive numbers that can be selected from the set $\{1,\ldots, r-j-3\}$ is $\left\lfloor\frac{r-j-2}
    {2}\right\rfloor$.

    Suppose $\sigma$ does not contains $s_{j+1}$.
    Then the smallest
    integer greater
    than and nonconsecutive to $j+1$ that can be selected is $j+2$.
    Then we select $k$ nonconsecutive integers from the set $\{j+2, \ldots, r-1\}$, which is the same count as if we select $k$ nonconsecutive integers from the set $\{1,\ldots, r-j-2\}$.
    The maximum number of nonconsecutive numbers that can be selected from the set $\{1,\ldots, r-j-2\}$ is $\left\lfloor\frac{r-j-1}{2}\right\rfloor$.

    The case where $\mu = \alpha_i + \cdots + \alpha_r$ with $2 \leq i \leq r$ and $\ell(\sigma)=k$ is analogous.\qedhere
\end{proof}

\begin{definition}
For any weight $\xi=c_1\alpha_1+c_2\alpha_2+\cdots+c_r\alpha_r$ with $c_i\in\mathbb{N}$, let $h(\xi)=\sum_{i=1}^rc_i$ denote the \textbf{height} of $\xi$. 
\end{definition}
The following technical lemma helps in giving formulas for the value of the $q$-analog.

\begin{lemma}\label{reindex poly}
    Let $s,t\geq 1$. If $\beta=\alpha_1+\alpha_2+\cdots +\alpha_s$ and $\gamma=\alpha_t+\alpha_{t+1}+\cdots +\alpha_{s+t-1}$, then \[\wp_q(\beta)=\wp_q(\gamma)=q(1+q)^{s-1}.\]
\end{lemma}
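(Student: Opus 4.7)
The plan is to first reduce the two claimed equalities to the single statement $\wp_q(\beta)=q(1+q)^{s-1}$ by a shift-of-indices bijection, and then compute $\wp_q(\beta)$ directly by a composition argument.

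First I would observe that any positive root decomposition of $\beta=\alpha_1+\alpha_2+\cdots+\alpha_s$ can only involve positive roots of the form $\alpha_a+\alpha_{a+1}+\cdots+\alpha_b$ with $1\le a\le b\le s$, since each $\alpha_i$ for $i>s$ has coefficient $0$ in $\beta$ and the coefficients in any positive root are nonnegative. Likewise, any decomposition of $\gamma=\alpha_t+\cdots+\alpha_{s+t-1}$ only involves positive roots $\alpha_{a'}+\cdots+\alpha_{b'}$ with $t\le a'\le b'\le s+t-1$. The map sending $\alpha_a+\cdots+\alpha_b$ to $\alpha_{a+t-1}+\cdots+\alpha_{b+t-1}$ is a length-preserving bijection between the supporting sets of positive roots, and it carries decompositions of $\beta$ bijectively to decompositions of $\gamma$ while preserving the number of summands. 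Hence $\wp_q(\beta)=\wp_q(\gamma)$.

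Next, to compute $\wp_q(\beta)$, I would identify each way of writing $\beta$ as a sum of $k$ positive roots with a composition of the index set $\{1,2,\ldots,s\}$ into $k$ consecutive blocks. Indeed, once the summands are ordered so that their index intervals $[a_1,b_1],[a_2,b_2],\ldots,[a_k,b_k]$ appear in increasing order, the condition that each $\alpha_i$ with $1\le i\le s$ has coefficient $1$ in the sum forces $a_1=1$, $b_k=s$, and $a_{h+1}=b_h+1$ for $1\le h<k$. Such data is exactly a composition of $s$ into $k$ positive parts, and there are $\binom{s-1}{k-1}$ of them.

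Summing, I would then conclude
\[\wp_q(\beta)=\sum_{k=1}^{s}\binom{s-1}{k-1}q^k=q\sum_{j=0}^{s-1}\binom{s-1}{j}q^j=q(1+q)^{s-1},\]
using the binomial theorem in the last step, and apply the bijection from the first paragraph to get the same value for $\wp_q(\gamma)$. No step looks to be a serious obstacle; the only care needed is in ruling out positive roots whose support extends beyond the interval $[1,s]$ (or $[t,s+t-1]$), which is immediate from nonnegativity of coefficients.
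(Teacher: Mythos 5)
Your proof is correct and follows essentially the same route as the paper: the heart of both arguments is identifying decompositions of a consecutive sum of simple roots with compositions of the interval into blocks, counted by $\binom{s-1}{k-1}$, and summing via the binomial theorem. The only cosmetic difference is that you make the index-shift bijection explicit and compute $\wp_q(\beta)$ from scratch, whereas the paper cites a prior result for $\wp_q(\beta)$ and runs the same composition count on $\gamma$.
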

\begin{proof}
    Let $\beta=\alpha_1+\alpha_2+\cdots +\alpha_s$ and $\gamma=\alpha_t+\alpha_{t+1}+\cdots +\alpha_{s+t-1}$.
    Notice that $h(\beta)=s$ and $h(\gamma)=(s+t-1)-t+1=s$.
    By \cite[Proposition 3.1]{PHThesisPublication}, we know \[\wp_q(\beta)=\wp_q(\alpha_1+\alpha_2+\cdots +\alpha_s)=q(1+q)^{s-1}.\]

    Now we compute $\wp_q(\gamma)$.
         In general, we can write $\gamma$ as a sum of $y\in[s]$ many positive roots by selecting where to place the parentheses enclosing consecutively indexed positive roots. 
         This can be done in $\binom{s-1}{y-1}$ ways. 
    Thus, we get
    \begin{align}
    \wp_q(\gamma)=q^s+(s-1)q^{s-1}+\binom{s-1}{2}q^{s-2}+\cdots+\binom{s-1}{y-1}q^{y}+\cdots+(s-1)q^2+q\nonumber&=\sum_{y=1}^{s}\binom{s-1}{y-1}q^{y}.\nonumber
    \end{align}
    Reindex $y$ to be $y+1$ and use the Binomial Theorem to get 
    \begin{align}
    \sum_{y=0}^{s-1}\binom{s-1}{y}q^{y+1}\nonumber=q\sum_{y=0}^{s-1}\binom{s-1}{y}q^{y} 
    = q(1+q)^{s-1}.\nonumber
    \end{align}
    Therefore, $\wp_q(\beta)=\wp_q(\gamma)$.
\end{proof}

Notice that if the height of $\beta$ and $\gamma$ are the same, then by Lemma \ref{reindex poly}, the resulting polynomials $\wp_q(\beta)$ and $\wp_q(\gamma)$ are the same. 
Namely, 
reindexing a positive root of the same height does not change the value of the $q$-analog on it.

The following propositions give the value of the $q$-analog of Kostant's partition function when the input is $\sigma(\hroot+\rho)-\rho-\mu$, with $\mu \in \Phi^+$ and $\sigma \in \calA(\tilde{\alpha}, \mu)$.
Similar to Proposition 6.1 in \cite{Kevin}, the following propositions are closely related.
However, they are broken up into different cases based on how $\mu$ is constructed.
Thus, we adapt the techniques from \cite{Kevin} in each of the proofs when needed.

\begin{proposition}\label{prop 1 through r}
    Let $\mu = \alpha_1 + \cdots + \alpha_r$. 
    If $\sigma\in\calA(\hroot,\mu)$, 
    then $\wp_q(\sigma(\lambda+\rho)-\rho-\mu) =1$.
\end{proposition}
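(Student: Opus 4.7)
The plan is to reduce this to the observation that the Weyl alternation set in question is a singleton. When $\mu = \alpha_1 + \alpha_2 + \cdots + \alpha_r$, we have $i = 1$ and $j = r$, so Theorem \ref{Weyl element thm} says that $\sigma \in \calA(\hroot,\mu)$ can only consist of simple reflections $s_{\ell}$ with $2 \leq \ell \leq i-1 = 0$ or $j+1 = r+1 \leq \ell \leq r-1$. Both index ranges are empty, so the only admissible element is $\sigma = 1$. Equivalently, Corollary \ref{Cardinality of Fibonacci} gives $|\calA(\hroot,\mu)| = F_1 \cdot F_1 = 1$.

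Having identified the unique element, I would then compute directly:
\begin{align*}
\wp_q(1 \cdot (\hroot + \rho) - \rho - \mu) = \wp_q(\hroot - \mu) = \wp_q(0).
\end{align*}
Since the weight $0$ can be written as a nonnegative integral sum of positive roots in exactly one way (namely the empty sum, using zero positive roots), we have $\wp_q(0) = 1$.

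There is no serious obstacle here; the proposition is essentially a corner case of the general framework, where the combinatorial room for nontrivial Weyl group contributions collapses because there are no simple root indices lying strictly outside the interval $[i,j] = [1,r]$. The only subtlety worth flagging explicitly is the convention that the empty sum of positive roots contributes the constant term $1$ to $\wp_q$, which is consistent with $c_0$ in the definition of the $q$-analog of Kostant's partition function.
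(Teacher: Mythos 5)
Your proposal is correct and follows essentially the same route as the paper: both invoke Theorem \ref{Weyl element thm} to observe that the admissible index ranges $\{2,\ldots,i-1\}$ and $\{j+1,\ldots,r-1\}$ are empty when $i=1$ and $j=r$, conclude that $\calA(\hroot,\mu)=\{1\}$, and then evaluate $\wp_q(0)=1$. Your additional cross-check via Corollary \ref{Cardinality of Fibonacci} and the explicit remark about the empty-sum convention are fine but not needed.
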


\begin{proof}
    By Theorem \ref{Nonconsecutive_Integers}, in order for $\sigma\in\calA(\hroot,\mu)$, it must be that $\sigma=s_{\ell_1}s_{\ell_2}\cdots s_{\ell_a}s_{\ell_{a+1}}s_{\ell_{a+2}}\cdots s_{\ell_{k}}$ where
    $2\leq \ell_1<\ell_2<\cdots<\ell_a\leq i-1$ and $j+1\leq \ell_{a+1}<\ell_{a+2}<\cdots<\ell_k\leq r-1$
    are all nonconsecutive integers. 
    We cannot use $s_{1}, \ldots, s_{r}$ in any subword of $\sigma$.
    This implies that $\sigma=1$ and $\calA(\hroot,\mu)=\{1\}$.
    Hence
    $\wp_q(\sigma(\lambda+\rho)-\rho-\mu)=\wp_q(1(\tilde{\alpha}+\rho)-\rho -(\tilde{\alpha}))= \wp_q(0)=1$,
    as desired.
\end{proof}

\begin{proposition}\label{prop 1 through j}
~
    \begin{enumerate}
        \item[(1)] If $\mu = \alpha_1 + \cdots + \alpha_j$, where $1\leq j \leq  r-1$ and if 
     $\sigma\in\calA(\hroot,\mu)$,  
    then
    \begin{equation}
    \begin{aligned}
        \wp_q(\sigma(\hroot+\rho)-\rho-\mu) &=\begin{cases}
             q^{\ell(\sigma)}(1+q)^{r-h(\mu)-2\ell(\sigma)} &\mbox{if $s_{j+1}$ is contained in $\sigma$}\\
              q^{1+\ell(\sigma)}(1+q)^{r-h(\mu)-2\ell(\sigma)-1} &\mbox{otherwise.}
        \end{cases}\\
    \end{aligned}
    \end{equation}
    
    \item[(2)] If $\mu = \alpha_i + \cdots + \alpha_r$, where $2\leq i\leq r$, and if $\sigma\in\calA(\hroot,\mu)$,  
    then
    \begin{equation}
    \begin{aligned}
        \wp_q(\sigma(\hroot+\rho)-\rho-\mu) &=\begin{cases}
              q^{\ell(\sigma)}(1+q)^{r-h(\mu)-2\ell(\sigma)} 
             &\mbox{if $s_{i-1}$ is containted in $\sigma$}\\
            q^{1+\ell(\sigma)}(1+q)^{r-h(\mu)-2\ell(\sigma)-1}
            &\mbox{otherwise.}
        \end{cases}\\
    \end{aligned}
    \end{equation}
    \end{enumerate}
    
\end{proposition}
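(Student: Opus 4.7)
The plan is to reduce the computation of $\wp_q(\sigma(\hroot+\rho)-\rho-\mu)$ to a product of simple factors by exploiting the structure of $\sigma$ coming from Theorem~\ref{Weyl element thm}. I will focus on part (1), since part (2) follows by the same argument after applying the diagram automorphism $\alpha_k\mapsto \alpha_{r+1-k}$ of type $A_r$.

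First I would unpack $\sigma(\hroot+\rho)-\rho-\mu$. By Theorem~\ref{Weyl element thm}, with $i=1$ the possible nontrivial $\sigma\in\calA(\hroot,\mu)$ are products $\sigma=s_{\ell_1}\cdots s_{\ell_t}$ with $\ell_1,\ldots,\ell_t\in\{j+1,\ldots,r-1\}$ pairwise nonconsecutive; in particular the $s_{\ell_h}$ pairwise commute. Since each $\ell_h$ satisfies $2\leq \ell_h\leq r-1$, a direct computation with the type $A_r$ Cartan matrix gives $s_{\ell_h}(\hroot)=\hroot$, while $s_{\ell_h}(\rho)=\rho-\alpha_{\ell_h}$. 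Iterating (and using commutativity) yields
\[
\sigma(\hroot+\rho)-\rho-\mu \;=\; \hroot-\mu-\sum_{h=1}^{t}\alpha_{\ell_h}\;=\;\sum_{k\in S}\alpha_k,
\]
where $S=\{j+1,\ldots,r\}\setminus\{\ell_1,\ldots,\ell_t\}$. The case $\sigma=1$ is $t=0$.

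Next I would observe that $\wp_q$ factors over the maximal consecutive runs of $S$. Indeed, in type $A_r$ every positive root is of the form $\alpha_a+\alpha_{a+1}+\cdots+\alpha_b$, so any positive root appearing in a decomposition of $\sum_{k\in S}\alpha_k$ must be supported inside a single maximal run of $S$ (any gap position has coefficient $0$ in $\xi$, blocking roots that would cross it). Hence if $S$ has maximal runs of lengths $s_1,\ldots,s_m$, then by Lemma~\ref{reindex poly},
\[
\wp_q\!\left(\sum_{k\in S}\alpha_k\right)\;=\;\prod_{h=1}^{m}q(1+q)^{s_h-1}\;=\;q^{m}(1+q)^{|S|-m}.
\]
Since $|S|=(r-j)-t=r-h(\mu)-\ell(\sigma)$, the only remaining question is to count the number $m$ of maximal runs.

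The last step, and the only bookkeeping point, is to pin down $m$ in terms of whether $s_{j+1}$ appears in $\sigma$. The removed indices $\ell_1<\cdots<\ell_t$ are nonconsecutive and all lie in $\{j+1,\ldots,r-1\}$, so each gap between successive $\ell_h$'s contains a surviving index, and $\ell_t\leq r-1$ guarantees the ``tail'' run $\{\ell_t+1,\ldots,r\}$ is nonempty. If $s_{j+1}$ appears in $\sigma$, then $\ell_1=j+1$ and there is no ``head'' run before $\ell_1$, giving $m=t=\ell(\sigma)$; the formula becomes $q^{\ell(\sigma)}(1+q)^{r-h(\mu)-2\ell(\sigma)}$. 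Otherwise $\ell_1\geq j+2$ (or $t=0$), producing one extra run $\{j+1,\ldots,\ell_1-1\}$ at the head, so $m=t+1=\ell(\sigma)+1$ (the $t=0$ case gives $m=1$), and the formula becomes $q^{\ell(\sigma)+1}(1+q)^{r-h(\mu)-2\ell(\sigma)-1}$. I expect this run-counting to be the only subtle step; everything else is forced by the structure of type $A$ root system. Part (2) follows by swapping the roles of the two endpoints of the Dynkin diagram.
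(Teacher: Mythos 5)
Your proof is correct and follows essentially the same route as the paper: both reduce $\sigma(\hroot+\rho)-\rho-\mu$ to a sum of simple roots over disjoint consecutive blocks, factor $\wp_q$ over those blocks, and apply Lemma~\ref{reindex poly} to each, with the presence or absence of $s_{j+1}$ determining whether there is an extra "head" block. Your run-counting formulation and the use of the diagram automorphism for part (2) (where the paper says "analogous") are only cosmetic differences, and your explicit justification that no positive root can cross a gap is a nice touch the paper leaves implicit.
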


\begin{proof}
    Let $\mu = \alpha_1 + \cdots + \alpha_j$.
    By Theorem \ref{Nonconsecutive_Integers}, if $\sigma\in\calA(\hroot,\mu)$,  then $\sigma=s_{\ell_1}s_{\ell_2}\cdots s_{\ell_{k}}$ where \\
    $j+1\leq \ell_{1}<\ell_{2}<\cdots<\ell_k\leq r-1$
    are all nonconsecutive integers.
    
    Suppose $\ell_1=j+1$, i.e. $s_{j+1}$ is contained in $\sigma$.
    Then take $\wp_q(\sigma(\tilde{\alpha}+\rho)-\rho - (\alpha_1+\cdots+\alpha_j))$ and simplify $\sigma(\tilde{\alpha}+\rho)-\rho - (\alpha_1+\cdots+\alpha_j)$ by segmenting the sum of $\alpha$'s by indicating where there was a $s_{x}$ in $\sigma$. 
    This yields
    \begin{align}
    \wp_q([\alpha_{\ell_1+1}+\cdots+\alpha_{\ell_{2}-1}]
        +[\alpha_{\ell_{2}+1}+\cdots+\alpha_{\ell_{3}-1}]
        +\cdots
        +[\alpha_{\ell_{k}+1}+\cdots+\alpha_{r}]).\label{Case 2.a.1}
    \end{align}
    Substituting $\ell_1=j+1$ into equation \eqref{Case 2.a.1} yields
\begin{align}\wp_q([\alpha_{j+2}+\cdots+\alpha_{\ell_{2}-1}]
    +[\alpha_{\ell_{2}+1}+\cdots+\alpha_{\ell_{3}-1}]
    +\cdots
    +[\alpha_{\ell_{k}+1}+\cdots+\alpha_{r}]).\label{Case 2.a.2}
\end{align}

    Since the subsets of the positive roots used to write each of the sums within the brackets are pairwise disjoint, we can rewrite equation \eqref{Case 2.a.2} as the product
    \begin{align}
    \wp_q(\alpha_{j+2}+\cdots+\alpha_{\ell_{2}-1})\cdot\wp_q(\alpha_{\ell_{2}+1}+\cdots+\alpha_{\ell_{3}-1})\cdots
    \wp_q(\alpha_{\ell_{k}+1}+\cdots+\alpha_{r})\label{Case 2.a.3}.
    \end{align}
    By Lemma \ref{reindex poly}, we get that equation \eqref{Case 2.a.3} simplifies to
    \begin{align}
    q(1+q)^{(\ell_2-j-2)-1} q(1+q)^{(\ell_3-\ell_2-1)-1}
    \cdots q(1+q)^{(r-\ell_{k})-1}
    &= q^{k}(1+q)^{r-j-2(k)}
    = q^{\ell(\sigma)}(1+q)^{r-j-2\ell(\sigma)}\label{Case 2.a.6}
\end{align}
where equation \eqref{Case 2.a.6} holds by simplifying the exponents for $q$ and $(1+q)$
and noting that there are $k = \ell(\sigma)$ terms.

Now suppose $\ell_1 \neq j+1$, i.e. $s_{j+1}$ is not contained in $\sigma$.
Then take $\wp_q(\sigma(\tilde{\alpha}+\rho)-\rho - (\alpha_1+\cdots+\alpha_j))$ and simplify $\sigma(\tilde{\alpha}+\rho)-\rho - (\alpha_1+\cdots+\alpha_j)$ by segmenting the sum of $\alpha$'s by indicating where there was a $s_{x}$ in $\sigma$.
This yields
\begin{align}
\wp_q([\alpha_{j+1}+\cdots+\alpha_{\ell_{1}-1}]+[\alpha_{\ell_{1}+1}+\cdots+\alpha_{\ell_{2}-1}]+
     \cdots+[\alpha_{\ell_{k}+1}+\cdots+\alpha_{r}]).\label{Case 2.b.1}
\end{align}

    Since the subsets of the positive roots  used to write each of the sums within the brackets are pairwise disjoint, we can rewrite the equation \eqref{Case 2.b.1} as the product
\begin{align}
    \wp_q(\alpha_{j+1}+\cdots+\alpha_{\ell_1-1})\cdot \wp_q(\alpha_{\ell_1+1}+\cdots+\alpha_{\ell_2-1}) \cdots
    \wp_q(\alpha_{\ell_k+1}+\cdots+\alpha_{r}).\label{Case 2.b.2}
\end{align}
By Lemma \ref{reindex poly}, we get that equation \eqref{Case 2.b.2} simplifies to
    \begin{align}
    q(1+q)^{(\ell_1-j-1)-1} \cdot q(1+q)^{(\ell_2-\ell_1-1)-1}
    \cdots q(1+q)^{(r-\ell_{k})-1}
    &= q^{k+1}(1+q)^{r-j-2(k)-1}\nonumber\\
    &= q^{\ell(\sigma)+1}(1+q)^{r-j-2\ell(\sigma)-1}\label{Case 2.b.5}
\end{align}
where equation \eqref{Case 2.b.5} holds by simplifying the exponents for $q$ and $(1+q)$.

Thus, since $h(\mu)=j$, we arrive at
        \[\wp_q(\sigma(\lambda+\rho)-\rho-\mu) =\begin{cases}
             q^{\ell(\sigma)}(1+q)^{r-h(\mu)-2\ell(\sigma)} &\mbox{if $\ell_1 = j+1$}\\
              q^{\ell(\sigma)+1}(1+q)^{r-h(\mu)-2\ell(\sigma)-1} &\mbox{if $\ell_1 \neq j+1$}.
        \end{cases}\]

        The proof for statement (2) is analogous.
\end{proof}

\begin{proposition}\label{prop i through j}
     Let $\mu = \alpha_i + \cdots + \alpha_j$, where $2 \leq i\leq j \leq r-1$. 
     If $\sigma\in\calA(\hroot,\mu)$,
     then
    \begin{align}
        \wp_q(\sigma(\hroot+\rho)-\rho-\mu) &=\begin{cases}
             q^{2+\ell(\sigma)}(1+q)^{r-h(\mu)-2\ell(\sigma)-2} &\mbox{if $s_{i-1}$ and $s_{j+1}$ are not containted in $\sigma$}\\
             q^{1+\ell(\sigma)}(1+q)^{r-h(\mu)-2\ell(\sigma)-1} &\mbox{if $s_{i-1}$ is contained in $\sigma$ but $s_{j+1}$ is not}\\
             q^{1+\ell(\sigma)}(1+q)^{r-h(\mu)-2\ell(\sigma)-1} &\mbox{if $s_{i-1}$ is not contained in $\sigma$ but $s_{j+1}$ is}\\
             q^{\ell(\sigma)}(1+q)^{r-h(\mu)-2\ell(\sigma)} & \mbox{if $s_{i-1}$ and $s_{j+1}$ are contained in $\sigma$.}
        \end{cases}
    \end{align}
\end{proposition}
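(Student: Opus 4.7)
The plan is to extend the segmentation argument used in Proposition \ref{prop 1 through j} to handle the two-sided situation where $2 \leq i \leq j \leq r-1$, so that indices on both sides of $[i,j]$ may now be used. Starting from Theorem \ref{Nonconsecutive_Integers}, write $\sigma = s_{\ell_1}\cdots s_{\ell_k} s_{\ell_{k+1}} \cdots s_{\ell_t}$, where $\ell_1 < \cdots < \ell_k$ are nonconsecutive integers from $\{2,\ldots,i-1\}$ and $\ell_{k+1} < \cdots < \ell_t$ are nonconsecutive integers from $\{j+1,\ldots,r-1\}$. Since every $\ell_h$ lies in $\{2,\ldots,r-1\}$ and the chosen indices are pairwise nonconsecutive, the reflections $s_{\ell_h}$ commute with one another and each acts on $\hroot + \rho$ simply by subtracting $\alpha_{\ell_h}$. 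Hence
\[
\sigma(\hroot + \rho) - \rho - \mu = \hroot - \mu - \sum_{h=1}^{t} \alpha_{\ell_h}.
\]

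The next step is to read off a segmentation of this weight as a disjoint sum of sums of consecutive simple roots. In the simple root expansion of the right-hand side, the coefficient of $\alpha_x$ is zero whenever $x \in \{i,i+1,\ldots,j\}\cup\{\ell_1,\ldots,\ell_t\}$ and equals $1$ otherwise. The indices with coefficient $1$ therefore partition into maximal runs of consecutive integers inside the left block $\{1,\ldots,i-1\}$ and inside the right block $\{j+1,\ldots,r\}$. Since the simple roots appearing in distinct runs are disjoint, $\wp_q$ factors over the runs, and by Lemma \ref{reindex poly} each run of length $s$ contributes a factor $q(1+q)^{s-1}$.

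All that remains is the bookkeeping of how many runs arise on each side. Writing $k_L = k$ and $k_R = t-k$, the left block produces $k_L$ runs if $\ell_k = i-1$ (that is, if $s_{i-1}$ appears in $\sigma$) and $k_L + 1$ runs otherwise, with total length $(i-1) - k_L$ in either case; the analogous statement holds on the right with $s_{j+1}$ in place of $s_{i-1}$ and total length $(r-j) - k_R$. Multiplying the two factored contributions, substituting $k_L + k_R = \ell(\sigma)$, and simplifying the exponents using $h(\mu) = j - i + 1$, the four combinations of ``$s_{i-1}$ present/absent'' and ``$s_{j+1}$ present/absent'' produce exactly the four displayed formulas.

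The principal obstacle is the case analysis at the boundary reflections: the mere presence of $s_{i-1}$ or $s_{j+1}$ changes the number of maximal runs (but not the total length) on the corresponding side, which shifts the exponent of $q$ up by one and the exponent of $1+q$ down by one. Once this boundary effect is tracked carefully, the argument reduces to running the one-sided analysis already carried out in Proposition \ref{prop 1 through j} in parallel on each side of $\mu$, and combining the two resulting monomials by multiplication.
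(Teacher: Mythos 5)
Your proposal is correct and takes essentially the same approach as the paper: both segment $\hroot-\mu-\sum_h\alpha_{\ell_h}$ into maximal runs of consecutive simple roots, use the multiplicativity of $\wp_q$ over the disjoint runs together with Lemma \ref{reindex poly}, and track how the presence or absence of $s_{i-1}$ and $s_{j+1}$ changes the run count by one on each side. The only cosmetic difference is that the paper packages the two sides as $\wp_q(\sigma_1(\hroot+\rho)-\rho-(\alpha_i+\cdots+\alpha_r))\cdot\wp_q(\sigma_2(\hroot+\rho)-\rho-(\alpha_1+\cdots+\alpha_j))$ and cites Proposition \ref{prop 1 through j} for each factor, whereas you redo the run bookkeeping directly; the computations agree.
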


\begin{proof}
    Let $\mu = \alpha_i + \cdots + \alpha_j$, where $2\leq i\leq j \leq r-1$ and note $h(\mu)=j-i+1$.

By Theorem \ref{Nonconsecutive_Integers},
since $\sigma\in\calA(\hroot,\mu)$, then  $\sigma=s_{\ell_1}s_{\ell_2}\cdots s_{\ell_a}s_{\ell_{a+1}}s_{\ell_{a+2}}\cdots s_{\ell_{k}}$ where \\
$2\leq \ell_1<\ell_2<\cdots<\ell_a\leq i-1$ and $j+1\leq\ell_{a+1}<\ell_{a+2}<\cdots<\ell_k\leq r-1$
are all nonconsecutive integers.
Let
$\sigma =\sigma_1\sigma_2$ where $\sigma_1=s_{\ell_1}s_{\ell_2}\cdots s_{\ell_a}$ and $\sigma_2=s_{\ell_{a+1}}s_{\ell_{a+2}}\cdots s_{\ell_{k}}$. 
Since the $\ell_x$'s appearing in $\sigma $ are nonconsecutive, then the $s_{\ell_x}$'s can appear in $\sigma$ in any order. This implies  $\sigma_1\sigma_2=\sigma_2\sigma_1$.
Then 
    \begin{align*}
        \wp_q(\sigma(\lambda+\rho)-\rho-\mu)
        &= \wp_q\left(\hroot+\rho - \left(\sum_{ s_{\ell_b}\in \sigma_1}\alpha_{\ell_b} \right)- \left(\sum_{s_{\ell_d}\in \sigma_2}\alpha_{\ell_d}\right) - \rho -\mu\right)\\
        &=\wp_q\left(\sum_{k=1}^r\alpha_k-\left(\sum_{ s_{\ell_b}\in \sigma_1}\alpha_{\ell_b} \right)- \left(\sum_{s_{\ell_d}\in \sigma_2}\alpha_{\ell_d}\right) -\sum_{k=i}^j\alpha_k\right).
        \end{align*}
    Simplifying $\sum_{k=1}^r\alpha_k-\left(\sum_{s_{\ell_b}\in \sigma_1}\alpha_{\ell_b} \right)- \left(\sum_{s_{\ell_d}\in \sigma_2}\alpha_{\ell_d}\right) -\sum_{k=i}^j\alpha_k$ by segmenting the sums and indicating where there was a $s_{x}$ in $\sigma$ yields      
    {\small\begin{align*}
        \wp_q([\alpha_1+\cdots+\alpha_{\ell_1-1}]+
        \cdots +[\alpha_{\ell_a+1}+\cdots+\alpha_{i-1}]
        + [\alpha_{j+1}+\cdots+\alpha_{\ell_{a+1}-1}]
        +\cdots+[\alpha_{\ell_{k}+1}+\cdots+\alpha_r]).
    \end{align*}}
        Since the subsets of the positive roots  used to write each of the sums within the brackets are pairwise disjoint, we can rewrite the equation as the product
  {\small\begin{align}
        \wp_q&([\alpha_1+\cdots+\alpha_{\ell_1-1}]+
        \cdots +[\alpha_{\ell_a+1}+\cdots+\alpha_{i-1}])\cdot\wp_q(
[\alpha_{j+1}+\cdots+\alpha_{\ell_{a+1}-1}]
        +\cdots+[\alpha_{\ell_{k}+1}+\cdots+\alpha_r])\nonumber\\
        &=\wp_q(\sigma_1(\hroot+\rho)-\rho-(\alpha_i+\cdots+\alpha_r))
        \cdot
        \wp_q(\sigma_2(\hroot+\rho)-\rho-(\alpha_1+\cdots+\alpha_{j})).\label{main eq}
\end{align}}

Suppose
$s_{i-1}$ and $s_{j+1}$ are not contained in $\sigma$.
Proposition \ref{prop 1 through j} allows us to simplify equation~\eqref{main eq} to get
\begin{align}
        \wp_q(\sigma(\hroot+\rho)-\rho-\mu)  &=q^{\ell(\sigma_1)+1}(1+q)^{i-2-2\ell(\sigma_1)}q^{\ell(\sigma_2)+1}(1+q)^{r-j-2\ell(\sigma)-1}\nonumber\\
        &=q^{2+\ell(\sigma)}(1+q)^{r-h(\mu)-2\ell(\sigma)-2}\label{Case 1.a.3}.
    \end{align}

Equation \eqref{Case 1.a.3} is achieved by combining the exponents for the $q$'s and the $(1+q)$'s and by recalling the fact that $\ell(\sigma_1)+\ell(\sigma_2)=\ell(\sigma)$ and $h(\mu)=j-i+1$.

Suppose 
$s_{i-1}$ is contained in $\sigma$ but $s_{j+1}$ is not contained in $\sigma$.
Proposition \ref{prop 1 through j} allows us to simplify equation \eqref{main eq} to get
\begin{align}
        \wp_q(\sigma(\hroot+\rho)-\rho-\mu)
        &=q^{\ell(\sigma_1)}(1+q)^{i-1-2\ell(\sigma_1)}q^{\ell(\sigma_2)+1}(1+q)^{r-j-2\ell(\sigma)-1}\nonumber\\
        &=q^{1+\ell(\sigma)}(1+q)^{r-h(\mu)-2\ell(\sigma)-1}.\label{Case 1.b.3}
    \end{align}
    
Equation \eqref{Case 1.b.3} is achieved by combine the exponents for the $q$'s and the $(1+q)$'s and by recalling the fact that $\ell(\sigma_1)+\ell(\sigma_2)=\ell(\sigma)$ and $h(\mu)=j-i+1$.

The case where  
$s_{i-1}$ is not contained in $\sigma$ and $s_{j+1}$ is contained in $\sigma$, is analogous to the case where 
$s_{i-1}$ is contained in $\sigma$ but $s_{j+1}$ is not contained in $\sigma$,
and hence we omit the argument.

Suppose
$s_{i-1}$ and $s_{j+1}$ are contained in $\sigma$.
Proposition \ref{prop 1 through j} allows us to simplify equation \eqref{main eq} to get
\begin{align}
        \wp_q(\sigma(\hroot+\rho)-\rho-\mu)
        &=q^{\ell(\sigma_1)}(1+q)^{(i-1)-2\ell(\sigma_1)}q^{\ell(\sigma_2)}(1+q)^{(r-j)-2\ell(\sigma_2)}\nonumber\\
        &=q^{\ell(\sigma)}(1+q)^{r-h(\mu)-2\ell(\sigma)}.\label{Case 1.d.3}
    \end{align}

Equation \eqref{Case 1.d.3} is achieved by combining the exponents for the $q$'s and the $(1+q)$'s and by recalling the fact that $\ell(\sigma_1)+\ell(\sigma_2)=\ell(\sigma)$ and $h(\mu)=j-i+1$.

Therefore, 
\[\wp_q(\sigma(\hroot+\rho)-\rho-\mu) =\begin{cases}
             q^{2+\ell(\sigma)}(1+q)^{r-h(\mu)-2\ell(\sigma)-2} &\mbox{if $s_{i-1}$ and $s_{j+1}$ are not containted in $\sigma$}\\
             q^{1+\ell(\sigma)}(1+q)^{r-h(\mu)-2\ell(\sigma)-1} &\mbox{if $s_{i-1}$ is contained in $\sigma$ but $s_{j+1}$ is not}\\
             q^{1+\ell(\sigma)}(1+q)^{r-h(\mu)-2\ell(\sigma)-1} &\mbox{if $s_{i-1}$ is not contained in $\sigma$ but $s_{j+1}$ is}\\
             q^{\ell(\sigma)}(1+q)^{r-h(\mu)-2\ell(\sigma)} & \mbox{if $s_{i-1}$ and $s_{j+1}$ are contained in $\sigma$.}
        \end{cases}\]
\end{proof}

\section{The \texorpdfstring{$q$}{q}-Analog of Kostant's Weight Multiplicity Formula of a Positive Root is a Power of \texorpdfstring{$q$}{q}}\label{Main Conjecture for q-analog Section}

We have all the information needed,
we are now ready to prove Theorem \ref{Main Conjecture}, which we restate for ease of reference.

\begin{theorem}\label{Main Conjecture}
    Fix $1\leq i \leq j\leq r$ and let $\hroot$ be the highest root of $\mathfrak{sl}_{r+1}(\C)$. 
    Then \[m_q(\tilde{\alpha}, \mu)= q^{r-h(\mu)},\]
    where 
    $h(\mu)=j-i+1$ denotes the height of $\mu$. 
\end{theorem}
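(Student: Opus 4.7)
The plan is to expand $m_q(\hroot,\mu) = \sum_{\sigma \in \calA(\hroot,\mu)} (-1)^{\ell(\sigma)}\wp_q(\sigma(\hroot+\rho)-\rho-\mu)$ case by case according to where the index interval $[i,j]$ sits inside $[1,r]$, substituting the closed forms from Propositions~\ref{prop 1 through r}--\ref{prop i through j} for each summand. By Theorem~\ref{Nonconsecutive_Integers}, every $\sigma \in \calA(\hroot,\mu)$ is a product of pairwise commuting simple reflections indexed by a nonconsecutive subset $T$, so $\ell(\sigma)=|T|$ and the signs and sums reindex cleanly by subset size.

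The degenerate case $\mu = \alpha_1+\cdots+\alpha_r$ is immediate from Proposition~\ref{prop 1 through r}: $\calA(\hroot,\mu)=\{1\}$ and $\wp_q(0)=1$, so $m_q = 1 = q^{r-h(\mu)}$. For the boundary case $\mu = \alpha_1+\cdots+\alpha_j$ with $j\leq r-1$, Proposition~\ref{prop 1 through j} gives
\[
\wp_q(\sigma(\hroot+\rho)-\rho-\mu) = \begin{cases} q^{|T|}(1+q)^{r-j-2|T|} & \text{if } j+1 \in T,\\ q^{|T|+1}(1+q)^{r-j-2|T|-1} & \text{if } j+1 \notin T,\end{cases}
\]
where $T \subseteq \{j+1,\ldots,r-1\}$ is the index set of $\sigma$. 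Using Lemma~\ref{Count nonconsecutive} to count nonconsecutive $T$ by size, the alternating sum splits into two pieces of the shape $\sum_{k \geq 0}(-1)^k \binom{n-k}{k} q^k (1+q)^{n-2k}$ for appropriate $n$. The essential identity I would prove is
\[
\sum_{k \geq 0}(-1)^k \binom{n-k}{k} q^k (1+q)^{n-2k} = \frac{q^{n+1}-1}{q-1},
\]
by induction on $n$, or equivalently by observing that both sides satisfy the linear recurrence $a_n = (1+q)a_{n-1}-q a_{n-2}$ (with characteristic roots $1$ and $q$) and agree at $n=0,1$. Applying the identity with $n=r-j-1$ and $n=r-j-2$ (after a shift extracting $s_{j+1}$ in the first sum), the two pieces combine to
\[
q\cdot \frac{q^{r-j}-1}{q-1} - q \cdot \frac{q^{r-j-1}-1}{q-1} = q\cdot \frac{q^{r-j-1}(q-1)}{q-1} = q^{r-j},
\]
which matches $q^{r-h(\mu)}$ since $h(\mu)=j$. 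The mirror case $\mu = \alpha_i+\cdots+\alpha_r$ with $i\geq 2$ is treated identically and yields $q^{i-1}$.

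For the interior case $\mu = \alpha_i+\cdots+\alpha_j$ with $2\leq i$ and $j\leq r-1$, I would invoke the factorization of $\wp_q$ established in the proof of Proposition~\ref{prop i through j}: writing $\sigma = \sigma_1\sigma_2$ with $\sigma_1$ using reflections from $\{2,\ldots,i-1\}$ and $\sigma_2$ from $\{j+1,\ldots,r-1\}$, the summand splits as
\[
\wp_q(\sigma_1(\hroot+\rho)-\rho-(\alpha_i+\cdots+\alpha_r))\cdot \wp_q(\sigma_2(\hroot+\rho)-\rho-(\alpha_1+\cdots+\alpha_j)).
\]
Since $\ell(\sigma)=\ell(\sigma_1)+\ell(\sigma_2)$ and the index sets $\{2,\ldots,i-1\}$ and $\{j+1,\ldots,r-1\}$ are disjoint, the full alternating sum factors as a product, yielding $m_q(\hroot,\alpha_i+\cdots+\alpha_r)\cdot m_q(\hroot,\alpha_1+\cdots+\alpha_j) = q^{i-1}\cdot q^{r-j} = q^{r-h(\mu)}$ by the boundary cases.

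The main obstacle is the combinatorial identity above: it is the one place where the Fibonacci-flavored count of nonconsecutive subsets interacts nontrivially with the $\wp_q$ values and collapses a sum of genuine polynomials in $q$ into a pure power of $q$. Once that identity is in hand, the degenerate, boundary, and interior cases all drop out by direct substitution and an elementary product decomposition.
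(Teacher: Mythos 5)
Your argument is correct, and it follows the same overall skeleton as the paper (case split on whether $i=1$ and/or $j=r$, reduction to the Weyl alternation set, substitution of the closed forms for $\wp_q$, and an alternating-sum evaluation), but it differs in two worthwhile ways. First, for the boundary cases you prove the key identity $\sum_{k\geq 0}(-1)^k\binom{n-k}{k}q^k(1+q)^{n-2k}=\sum_{t=0}^{n}q^t$ yourself via the recurrence $a_n=(1+q)a_{n-1}-qa_{n-2}$ (with roots $1$ and $q$), whereas the paper outsources exactly this collapse to \cite[Proposition 3.2]{PHThesisPublication} and \cite[Theorem 6.1]{Kevin}; your route makes the proof self-contained and isolates the one genuinely combinatorial step. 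Second, for the interior case you observe that, since $\calA(\hroot,\mu)$ is the product set of $\calA(\hroot,\alpha_i+\cdots+\alpha_r)$ and $\calA(\hroot,\alpha_1+\cdots+\alpha_j)$ with additive lengths, the factorization of $\wp_q$ in equation \eqref{main eq} makes the entire alternating sum factor as $m_q(\hroot,\alpha_i+\cdots+\alpha_r)\cdot m_q(\hroot,\alpha_1+\cdots+\alpha_j)=q^{i-1}\cdot q^{r-j}$; the paper instead partitions $\calA(\hroot,\mu)$ into the four pieces $N_{i-1}\cap N_{j+1}$, $N_{i-1}\cap Y_{j+1}$, $Y_{i-1}\cap N_{j+1}$, $Y_{i-1}\cap Y_{j+1}$, evaluates each as a product of zero-weight $q$-multiplicities in smaller-rank algebras, and recombines. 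Your product factorization is cleaner and reuses the already-established boundary cases directly, at the cost of having to state explicitly that the sum over a product set of a product of functions with multiplicative signs factors — which is immediate here because the two index sets $\{2,\ldots,i-1\}$ and $\{j+1,\ldots,r-1\}$ are disjoint and every element of $\calA(\hroot,\mu)$ decomposes uniquely as $\sigma_1\sigma_2$.
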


\begin{proof}
    Let
    $\mu=\alpha_i+\alpha_{i+1}+\cdots+\alpha_j\in \Phi^+$
    with $1\leq i\leq j\leq r$,
    which has height $h(\mu)=j-i+1$.
    By definition
    \[m_q(\hroot,\mu)=\sum_{\sigma\in W}(-1)^{\ell(\sigma)}\wp_q(\sigma(\hroot+\rho)-\rho-\mu).\]
     By Theorem \ref{Nonconsecutive_Integers}, we can reduce the computing to be only on the Weyl alternation set. Namely,
    \[m_q(\hroot,\mu)=\sum_{\sigma\in \calA(\hroot,\mu)}(-1)^{\ell(\sigma)}\wp_q(\sigma(\hroot+\rho)-\rho-\mu).\]
For any $x\in[r]$, let
\begin{itemize}
    \item $N_x=\{\sigma\in\calA(\hroot,\mu)\;|\;\mbox{$\sigma$ does not contain $s_x$ in its reduced word}\}$
    \item $Y_x=\{\sigma\in\calA(\hroot,\mu)\;|\;\mbox{$\sigma$ contains $s_x$ in its reduced word}\}$.
\end{itemize}
For each case, we partition the set $\calA(\hroot, \mu)$ into disjoint subsets of $N_x$ and $Y_x$ according to the construction of $\mu$. 

\noindent \textbf{Case 1}:
Fix $\mu= \alpha_i+\cdots+\alpha_j$ where $2\leq i\leq j\leq r-1$.
To begin, we follow the same construction of equation \eqref{main eq}, which we restate here:
\begin{align}
    \wp_q(\sigma(\hroot+\rho)-\rho-\mu)
    =\wp_q(\sigma_1(\hroot+\rho)-\rho-(\alpha_i+\cdots+\alpha_r))
        \cdot
        \wp_q(\sigma_2(\hroot+\rho)-\rho-(\alpha_1+\cdots+\alpha_{j})) \label{Case 1 wp}
\end{align}
where $\ell(\sigma)=k$, $\sigma=\sigma_1\sigma_2$, $\sigma_1 = s_{\ell_1}s_{\ell_2}\cdots s_{\ell_a}$ where $2 \leq \ell_1 < \ell_2 < \cdots < \ell_a \leq i-1$, and
$\sigma_2 = s_{\ell'_1}s_{\ell'_2}\cdots s_{\ell'_b}$ where $j+1 \leq \ell'_1 < \ell'_2 < \cdots < \ell'_b \leq r-1$ are all nonconsecutive integers. Note $\ell(\sigma_1)=a$, $\ell(\sigma_2)=b$, and $\ell(\sigma)=k=a+b$.
By construction, we have 
\begin{align}
    \calA(\hroot,\mu)=(N_{i-1}\cap N_{j+1})\sqcup (N_{i-1} \cap Y_{j+1})\sqcup(Y_{i-1} \cap N_{j+1})\sqcup(Y_{i-1}\cap Y_{j+1})\label{split into 4 sums}
\end{align}
where $\sigma$ is considered to be in the set {$N_{i-1}\cap N_{j+1}$, $N_{i-1} \cap Y_{j+1}$, $Y_{i-1} \cap N_{j+1}$, or $Y_{i-1}\cap Y_{j+1}$.}

\begin{itemize}
    \item Let $\sigma\in N_{i-1}\cap N_{j+1}$ such that $\ell(\sigma)=\ell(\sigma_1)+\ell(\sigma_2)=k$ where we let $\ell(\sigma_1)=a$ and $\ell(\sigma_2)=b$. 
    Then 
\begin{align*}
\wp_q(\sigma_1(\hroot+\rho)-\rho-(\alpha_i+\cdots+\alpha_r))
        &=\wp_q\left(\hroot +\rho - \sum_{s_{x} \in \sigma_1}\alpha_{x} - \rho - (\alpha_i + \cdots +\alpha_r)\right)\\ 
        &= \wp_q\left(\alpha_1+\cdots+\alpha_{i-1}+\gamma-\sum_{s_{x} \in \sigma_1}\alpha_{x}-\gamma\right)\\
        &=\wp_q(\sigma_1(\beta+\gamma)-\gamma)
\end{align*}
where $\beta$ is the highest root of $\mathfrak{sl}_{i}(\mathbb{C})$, $\gamma$ is the sum of the fundamental weights of $\mathfrak{sl}_{i}(\mathbb{C})$, and $\sigma_1\in\calA(\beta,0)$ \cite[Theorem 2.1]{PHThesisPublication}.

Similarly,
\[
\wp_q(\sigma_2(\hroot+\rho)-\rho-(\alpha_{1}+\cdots+\alpha_j))
        =\wp_q(\sigma_2(\beta'+\gamma')-\gamma')
\]
where $\beta'$ is the highest root of $\mathfrak{sl}_{r-j+1}(\mathbb{C})$, $\gamma'$ is the sum of the fundamental weights of $\mathfrak{sl}_{r-j+1}(\mathbb{C})$, and $\sigma_2\in\calA(\beta',0)$ \cite[Theorem 2.1]{PHThesisPublication}.

By \cite[Proposition 3.2]{PHThesisPublication},
note that 
\begin{align*}
\wp_q(\sigma_1(\beta+\gamma)-\gamma)&=q ^{1 + \ell ( \sigma_1 )} ( 1 + q )^{ (i-1) - 1 -2 \ell( \sigma_1)}\mbox{ and}\\
\wp_q(\sigma_2(\beta'+\gamma')-\gamma')&=q^{1 + \ell ( \sigma_2 )} ( 1 + q )^{ (r-j) -1-2 \ell( \sigma_2)}.
\end{align*}
Given the equality in equation \eqref{Case 1 wp}, note that 
\begin{align}
&\sum_{\sigma\in N_{i-1}\cap N_{j+1}}(-1)^{\ell(\sigma)}\wp_q(\sigma(\hroot+\rho)-\rho-\mu)\nonumber\\
&~=\left(\sum_{\sigma_1\in \calA(\beta,0)}(-1)^{\ell(\sigma_1)}\wp_q(\sigma_1(\beta+\gamma)-\gamma)\right)\cdot \left(\sum_{\sigma_2\in \calA(\beta',0)}(-1)^{\ell(\sigma_2)}\wp_q(\sigma_2(\beta'+\gamma')-\gamma')\right)\nonumber\\
&=m_q(\beta,0)\cdot m_q(\beta',0)\nonumber\\
&=\left(\sum_{t=1}^{i-1}q^t\right)\left(\sum_{t=1}^{r-j}q^t\right),\label{kevin eq 1}
\end{align}
where the equality in \eqref{kevin eq 1} follows from \cite[Theorem 6.1]{Kevin}.

    \item If $\sigma\in N_{i-1}\cap Y_{j+1}$ such that $\ell(\sigma)=\ell(\sigma_1)+\ell(\sigma_2)=k+1$ where we let $\ell(\sigma_1)=a$ and $\ell(\sigma_2)=b+1$.
    Then $\wp_q(\sigma_1(\hroot+\rho)-\rho-(\alpha_i+\cdots+\alpha_r))=\wp_q(\sigma_1(\beta+\gamma)-\gamma)$ like above.
However,
\[
\wp_q(\sigma_2(\hroot+\rho)-\rho-(\alpha_{1}+\cdots+\alpha_j))
        =\wp_q(\sigma_2(\beta'+\gamma')-\gamma')
\]
where $\beta'$ is the highest root of $\mathfrak{sl}_{r-j}(\mathbb{C})$, $\gamma'$ is the sum of the fundamental weights of {$\mathfrak{sl}_{r-j}(\mathbb{C})$}, and $\sigma_2\in\calA(\beta',0)$ \cite[Theorem 2.1]{PHThesisPublication}.

By \cite[Proposition 3.2]{PHThesisPublication},
note that 
\begin{align*}
\wp_q(\sigma_1(\beta+\gamma)-\gamma)&=q ^{1 + \ell ( \sigma_1 )} ( 1 + q )^{ (i-1) - 1 -2 \ell( \sigma_1)} \mbox{ and }\\
\wp_q(\sigma_2(\beta'+\gamma')-\gamma')&=q^{1+\ell(\sigma_2)}(1+q)^{(r-j-1)-1-2\ell(\sigma_2)}.
\end{align*}
Given the equality in equation \eqref{Case 1 wp}, note that 
\begin{align}
&\sum_{\sigma\in N_{i-1}\cap Y_{j+1}}(-1)^{\ell(\sigma)}\wp_q(\sigma(\hroot+\rho)-\rho-\mu)\nonumber\\
&~=\left(\sum_{\sigma_1\in \calA(\beta,0)}(-1)^{\ell(\sigma_1)}\wp_q(\sigma_1(\beta+\gamma)-\gamma)\right)\cdot \left(\sum_{\sigma_2\in \calA(\beta',0)}(-1)^{\ell(\sigma_2)}\wp_q(\sigma_2(\beta'+\gamma')-\gamma')\right)\nonumber\\
&=m_q(\beta,0)\cdot (-1) m_q(\beta',0)\label{case a: -1}
\end{align}
where the equality in \eqref{case a: -1} is satisfied because $\ell(\sigma_2)=b+1$.
Then we get
\begin{align}
(-1)\left(\sum_{t=1}^{i-1}q^t\right)\left(\sum_{t=1}^{r-j-1}q^t\right),\label{kevin eq 2}
\end{align}
where expression \eqref{kevin eq 2} follows from \cite[Theorem 6.1]{Kevin}.

    \item If $\sigma\in Y_{i-1}\cap N_{j+1}$ such that $\ell(\sigma)=\ell(\sigma_1)+\ell(\sigma_2)=k+1$ {where we let $\ell(\sigma_1)=a+1$ and $\ell(\sigma_2)=b$}. 
    Now 
\[\wp_q(\sigma_1(\hroot+\rho)-\rho-(\alpha_i+\cdots+\alpha_r))
        =\wp_q(\sigma_1(\beta+\gamma)-\gamma)
\]
where $\beta$ is the highest root of {$\mathfrak{sl}_{i-1}(\mathbb{C})$}, $\gamma$ is the sum of the fundamental weights of {$\mathfrak{sl}_{i-1}(\mathbb{C})$}, and $\sigma_1\in\calA(\beta,0)$ \cite[Theorem 2.1]{PHThesisPublication}.

Like above, $\wp_q(\sigma_2(\hroot+\rho)-\rho-(\alpha_{1}+\cdots+\alpha_j))=\wp_q(\sigma_2(\beta'+\gamma')-\gamma')$.

By \cite[Proposition 3.2]{PHThesisPublication}
note that 
\begin{align*}
\wp_q(\sigma_1(\beta+\gamma)-\gamma)&=q ^{1+\ell(\sigma_1)}(1+q)^{(i-2)-1-2\ell(\sigma_1)}\mbox{ and}\\
\wp_q(\sigma_2(\beta'+\gamma')-\gamma')&=q^{1+\ell(\sigma_2)}(1+q)^{(r-j)-1-2\ell(\sigma_2)}.
\end{align*}
Given the equality in equation \eqref{Case 1 wp}, note that 
\begin{align}
&\sum_{\sigma\in Y_{i-1}\cap N_{j+1}}(-1)^{\ell(\sigma)}\wp_q(\sigma(\hroot+\rho)-\rho-\mu)\nonumber\\
&=\left(\sum_{\sigma_1\in \calA(\beta,0)}(-1)^{\ell(\sigma_1)}\wp_q(\sigma_1(\beta+\gamma)-\gamma)\right)\cdot \left(\sum_{\sigma_2\in \calA(\beta',0)}(-1)^{\ell(\sigma_2)}\wp_q(\sigma_2(\beta'+\gamma')-\gamma')\right)\nonumber\\
&=(-1) m_q(\beta,0)\cdot m_q(\beta',0)\label{case b: -1}
\end{align}
where the equality in \eqref{case b: -1} is satisfied because $\ell(\sigma_1)=a+1$.
Then we get
\begin{align}
(-1)\left(\sum_{t=1}^{i-2}q^t\right)\left(\sum_{t=1}^{r-j}q^t\right),\label{kevin eq 3}
\end{align}
where expression \eqref{kevin eq 3} follows from \cite[Theorem 6.1]{Kevin}.

    \item If $\sigma\in Y_{i-1}\cap Y_{j+1}$ such that $\ell(\sigma)=\ell(\sigma_1)+\ell(\sigma_2)=k+2$ {where we let $\ell(\sigma_1)=a+1$ and $\ell(\sigma_2)=b+1$}. 
    Like in the above respective corresponding cases,\\ $\wp_q(\sigma_1(\hroot+\rho)-\rho-(\alpha_i+\cdots+\alpha_r))=\wp_q(\sigma_1(\beta+\gamma)-\gamma)$ and \\ $\wp_q(\sigma_2(\hroot+\rho)-\rho-(\alpha_{1}+\cdots+\alpha_j))=\wp_q(\sigma_2(\beta'+\gamma')-\gamma')$.

By \cite[Proposition 3.2]{PHThesisPublication}
note that
\begin{align*}
\wp_q(\sigma_1(\beta+\gamma)-\gamma)&=q ^{1+\ell(\sigma_1)}(1+q)^{(i-2)-1-2\ell(\sigma_1)},\mbox{ and}\\
\wp_q(\sigma_2(\beta'+\gamma')-\gamma')&=q^{1+\ell(\sigma_2)}(1+q)^{(r-j-1)-1-2\ell(\sigma_2)}.
\end{align*}
Given the equality in equation \eqref{Case 1 wp}, note that 
\begin{align}
&\sum_{\sigma\in Y_{i-1}\cap Y_{j+1}}(-1)^{\ell(\sigma)}\wp_q(\sigma(\hroot+\rho)-\rho-\mu)\nonumber\\
&=\left(\sum_{\sigma_1\in \calA(\beta,0)}(-1)^{\ell(\sigma_1)}\wp_q(\sigma_1(\beta+\gamma)-\gamma)\right)\cdot \left(\sum_{\sigma_2\in \calA(\beta',0)}(-1)^{\ell(\sigma_2)}\wp_q(\sigma_2(\beta'+\gamma')-\gamma')\right)\nonumber\\
&=(-1) m_q(\beta,0)\cdot (-1) m_q(\beta',0)\label{case c: -1}
\end{align}
where the equality in \eqref{case c: -1} is satisfied because $\ell(\sigma_1)=a+1$ and $\ell(\sigma_1)=b+1$.
Then we get
\begin{align}
\left(\sum_{t=1}^{i-2}q^t\right)\left(\sum_{t=1}^{r-j-1}q^t\right),\label{kevin eq 4}
\end{align}
where expression \eqref{kevin eq 4} follows from \cite[Theorem 6.1]{Kevin}.
\end{itemize}
    
    Now that we have all cases, we can construct the $m_q(\hroot, \mu)$ from \eqref{split into 4 sums} by adding expression \eqref{kevin eq 1}, expression \eqref{kevin eq 2}, expression \eqref{kevin eq 3}, and expression \eqref{kevin eq 4} to get the following:
    \begin{align*}
        m_q(\hroot, \mu)
        &= \left(\sum_{t=1}^{i-1}q^t\right)\left(\sum_{t=1}^{r-j}q^t\right)-\left(\sum_{t=1}^{i-1}q^t\right)\left(\sum_{t=1}^{r-j-1}q^t\right)-\left(\sum_{t=1}^{i-2}q^t\right)\left(\sum_{t=1}^{r-j}q^t\right)+\left(\sum_{t=1}^{i-2}q^t\right)\left(\sum_{t=1}^{r-j-1}q^t\right)\nonumber\\
        &= q^{r-(j-i+1)}.
    \end{align*}

Thus, $m_q(\hroot, \mu)=q^{r-h(\mu)}$ when $\mu = \alpha_i+\cdots + \alpha_j \in \Phi^+$ with $2\leq i\leq j \leq r-1$ and $h(\mu)=j-i+1$.

\noindent \textbf{Case 2}:
    Suppose $\mu=\alpha_1+\cdots+\alpha_j\in \Phi^+$ where $1\leq j\leq r-1$.
    Notice that $h(\mu)=j$.

     Observe that $\calA(\tilde{\alpha}, \mu)=N_{j+1}\sqcup Y_{j+1}$ is a disjoint union.
Thus,
\begin{align}
    m_q(\tilde{\alpha}, \mu)  =& \sum_{\sigma \in {N_{j+1}}} (-1)^{\ell(\sigma)}\wp_q(\sigma(\tilde{\alpha}+\rho)-\rho - \mu)+ \sum_{\sigma \in {Y_{j+1}}} (-1)^{\ell(\sigma)}\wp_q(\sigma(\tilde{\alpha}+\rho)-\rho - \mu). \label{q-analog for 1 through j}
\end{align}

First, consider $\sum_{\sigma \in {N_{j+1}}} (-1)^{\ell(\sigma)}\wp_q(\sigma(\tilde{\alpha}+\rho)-\rho - \mu)$.
By Lemma \ref{bound for r-j}, we know the maximum number of $\sigma \in \calA(\hroot, \mu)$.
By Lemma \ref{Sum for r-j}, we know there are $\binom{r-j-1-k}{k}$ elements $\sigma \in N_{j+1}$ of length $k$ and that the maximum length of any $\sigma\in N_{j+1}$ is $\left\lfloor\frac{r-j-1}{2}\right\rfloor$.
Proposition \ref{prop 1 through j} gives the value of the $q$-analog $\wp_q(\sigma(\tilde{\alpha}+\rho)-\rho - \mu)$ for $\sigma\in N_{j+1}$.
Thus, if $\sigma \in N_{j+1}$ and we use \cite[Proposition 3.2]{PHThesisPublication}, then the first sum in equation \eqref{q-analog for 1 through j} becomes 
\begin{align}
    \sum_{k=0}^{\left\lfloor \frac{r-j-1}{2}\right\rfloor}(-1)^{k}\binom{r-j-1-k}{k}q^{k+1}(1+q)^{r-j-1-2k}=\sum_{t=1}^{r-j}q^t.\label{expression for N_{j+1}}
\end{align}

Similarly, consider $\sum_{\sigma \in {Y_{j+1}}} (-1)^{\ell(\sigma)}\wp_q(\sigma(\tilde{\alpha}+\rho)-\rho - \mu)$.
By Lemma \ref{bound for r-j},
we know there are $\binom{r-j-2-k}{k}$ elements $\sigma \in Y_{j+1}$ of length $k$ and that the maximum length of any $\sigma\in Y_{j+1}$ is $\left\lfloor\frac{r-j-2}{2}\right\rfloor$.
Proposition \ref{prop 1 through j} gives the value of the $q$-analog $\wp_q(\sigma(\tilde{\alpha}+\rho)-\rho - \mu)$ for $\sigma\in Y_{j+1}$.
Thus, if $\sigma \in Y_{j+1}$ and we use \cite[Proposition 3.2]{PHThesisPublication}, then the second sum in equation \eqref{q-analog for 1 through j} becomes 
\begin{align}
    \sum_{k=0}^{\left\lfloor \frac{r-j-2}{2}\right\rfloor}(-1)^{k+1}\binom{r-j-2-k}{k}q^{k+1}(1+q)^{r-j-2(k+1)}= - \sum_{t=1}^{r-j-1}q^t.\label{Expression for Y_{j+1}}
\end{align}

Calculate $m_q(\hroot, \mu)$ by adding expression \eqref{expression for N_{j+1}} and expression \eqref{Expression for Y_{j+1}} to get
\begin{align}
     \sum_{t=1}^{r-j}q^t- \sum_{t=1}^{r-j-1}q^t = q^{r-j}.
\end{align}
Thus, $m_q(\hroot, \mu)=q^{r-h(\mu)}$ since $h(\mu)=j$.

\noindent \textbf{Case 3}:
Suppose $\mu=\alpha_i+\cdots+\alpha_r\in \Phi^+$ where $2\leq  i\leq r$.
    Notice that $h(\mu)=r-i+1$.

Observe that $\calA(\tilde{\alpha}, \mu)=N_{i-1} \sqcup Y_{i-1}$ is a disjoint union. 
Thus,
\begin{align}
   m_q(\tilde{\alpha}, \mu)= \sum_{\sigma \in N_{i-1}} (-1)^{\ell(\sigma)}\wp_q(\sigma(\tilde{\alpha}+\rho)-\rho - \mu) + \sum_{\sigma \in Y_{i-1}} (-1)^{\ell(\sigma)}\wp_q(\sigma(\tilde{\alpha}+\rho)-\rho - \mu). \label{q-analog for i through r}
\end{align}

First, consider $\sum_{\sigma \in N_{i-1}} (-1)^{\ell(\sigma)}\wp_q(\sigma(\tilde{\alpha}+\rho)-\rho - \mu)$.
By Lemma \ref{bound for r-j}, we know the maximum length of an element $\sigma\in N_{i-1}$ is $\left\lfloor\frac{i-2}{2}\right\rfloor$.
By Lemma \ref{Sum for r-j}, we know there are $\binom{i-2-k}{k}$ elements $\sigma \in N_{i-1}$. 
Proposition \ref{prop 1 through j} gives the value of the $q$-analog  $\wp_q(\sigma(\tilde{\alpha}+\rho)-\rho - \mu)$ for $\sigma\in N_{i-1}$.
Thus if $\sigma \in N_{i-1}$ and we use \cite[Proposition 3.2]{PHThesisPublication}, the first sum in equation \eqref{q-analog for i through r} becomes 
\begin{align}
    \sum_{k=0}^{\left\lfloor \frac{i-2}{2}\right\rfloor}(-1)^{k}\binom{i-2-k}{k}q^{k+1}(1+q)^{i-2-2k}= \sum_{t=1}^{i-1}q^t.\label{expression for N_{i-1}}
\end{align}

Similarly, consider $\sum_{\sigma \in Y_{i-1}} (-1)^{\ell(\sigma)}\wp_q(\sigma(\tilde{\alpha}+\rho)-\rho - \mu)$.
By Lemma \ref{bound for r-j}, we know the maximum length of an element $\sigma\in Y_{i-1}$ is $\left\lfloor\frac{i-3}{2}\right\rfloor$.
By Lemma \ref{Sum for r-j}, we know there are $\binom{i-3-k}{k}$ elements $\sigma \in Y_{i-1}$. 
Proposition \ref{prop 1 through j} gives the value of the $q$-analog  $\wp_q(\sigma(\tilde{\alpha}+\rho)-\rho - \mu)$ for $\sigma\in Y_{i-1}$.
Thus if $\sigma \in Y_{i-1}$ and we use \cite[Proposition 3.2]{PHThesisPublication}, the second sum in equation \eqref{q-analog for i through r} becomes 
\begin{align}
    \sum_{k=0}^{\left\lfloor \frac{i-3}{2}\right\rfloor}(-1)^{k+1}\binom{i-3-k}{k}q^{k+1}(1+q)^{(i-1)-2{(k+1)}}= - \sum_{t=1}^{i-2}q^t.\label{Expression for Y_{i-1}}
\end{align}

Calculate $m_q(\hroot, \mu)$ by adding expression \eqref{expression for N_{i-1}} and expression \eqref{Expression for Y_{i-1}} to get
\begin{align}
    \sum_{t=1}^{i-1}q^t - \sum_{t=1}^{i-2}q^t = q^{i-1}.
\end{align}
Thus, $m_q(\hroot, \mu)=q^{r-h(\mu)}$ since $h(\mu)=r-i+1$.

\noindent \textbf{Case 4}: 
Suppose $\mu=\alpha_1+\cdots+\alpha_r \in \Phi^+$.
Notice that $h(\mu)=r-1+1=r$.
Observe that the elements of $\calA(\tilde{\alpha}, \mu)=\{1\}$.
Thus,    \[
        m_q(\tilde{\alpha}, \mu) = \wp_q(\sigma(\tilde{\alpha}+\rho)-\rho - \mu)= \wp_q(1(\tilde{\alpha}+\rho)-\rho - \tilde{\alpha})= \wp_q(0)
        = 1= q^{r-(r-1+1)}=q^{r-h(\mu)}.
    \]

Therefore, when $r \geq 1$ and $\hroot$ is the highest root of $\mathfrak{sl}_{r+1}(\mathbb{C})$ and $\mu=\alpha_i+\alpha_{i+1}+\cdots+\alpha_j\in \Phi^+$ with $h(\mu)=j-i+1$, then
    $m_q(\tilde{\alpha}, \mu)= q^{r-h(\mu)}$.
\end{proof}

\bibliographystyle{abbrv}
\bibliography{bibliography}

\end{document}